\documentclass[reqno]{amsproc}

\usepackage{hyperref}

\hypersetup{
colorlinks=true,
linkcolor=blue,
anchorcolor=blue,
citecolor=blue
}
\usepackage[T1]{fontenc}
\usepackage{lmodern}

\usepackage{lmodern}
\usepackage{amsmath,amssymb,amsthm,mathtools}
\usepackage[letterpaper,margin=1.03in]{geometry}
\usepackage{microtype}
\usepackage{array,booktabs,mathrsfs}
\usepackage{graphicx}
\usepackage{color}

\newtheorem{theorem}{Theorem}[section]
\newtheorem{lemma}[theorem]{Lemma}
\newtheorem{proposition}[theorem]{Proposition}
\newtheorem{corollary}[theorem]{Corollary}

\newtheorem{remark}[theorem]{Remark}

\numberwithin{equation}{section}

\def\Ric{\text{Ric}}

\def\Ric{\operatorname{Ric}}
\def\Rm{\operatorname{Rm}}

\newcommand{\R}{\mathbb R}
\newcommand{\g}{\mathfrak g}
\newcommand{\n}{\mathfrak n}
\newcommand{\cM}{\mathcal M}
\newcommand{\cL}{\mathcal L}
\newcommand{\cJ}{\mathcal J}
\newcommand{\cR}{\mathcal R}

\newcommand{\ip}[2]{\langle #1,#2\rangle}
\newcommand{\norm}[1]{\lVert #1\rVert}
\DeclareMathOperator{\Lie}{Lie}

\DeclareMathOperator{\tr}{tr}

\DeclareMathOperator{\Span}{span}
\DeclareMathOperator{\ad}{ad}
\DeclareMathOperator{\Ad}{Ad}

\DeclareMathOperator{\Ann}{Ann}

\newcommand{\SO}{{\mathsf{SO}}}

\def \R {{\mathbb R}}

\def\Ad {{\rm Ad}}

\def\g {{\mathfrak{g}}}

\def\Rm{{\operatorname R}}
\def\Ric{{\operatorname{Ric}}}

\def\ad{{\operatorname{ad}}}

\numberwithin{equation}{section}

\begin{document}

	\title[Simons' theorem]{A single induction proof of  Simons'  Riemannian 
  holonomy theorem}

\author{Lei Ni}\thanks{We would like to thank SICM where the first version of
 a single induction proof was worked out. }
\address{Lei Ni. Zhejiang Normal University, Jinhua, Zhejiang, China, 321004; 
 Department of Mathematics, University of California, San Diego, La Jolla, CA 92093, USA}
\email{lni.math.ucsd@gmail.com; leni@zjnu.edu.cn}

\subjclass[2010]{}
\subjclass[2010]{53C29, 53C05, 58D19, 57S25.}
\keywords{Curvature, Lie groups, holonomy groups,  holonomy system, trace idnetities.}

\begin{abstract} We give a detailed and complete  algebraic proof of Simons' 
  Riemannian holonomy theorem by a single 
strong induction on the dimension of the linear span of the curvature orbit.
The proof uses the construction of flats and root centralizers, but replaces
the detailed analysis of the common zero-weight space, the most technical core of previous algebraic 
 proofs,  by a trace-vanishing
lemma.  The lemma applies to curvature operators contained in an ideal that
annihilates the curvature module.  Restriction to a common geodesic subspace produces a proper
invariant kernel.  Induction makes this kernel fixed by the holonomy action, after which
the trace lemma shows that the root centralizers span the ambient space.
Their intersection recovers the maximal flat, and irreducibility completes
the proof.  For the completeness we also include the detailed derivation of 
the local symmetry of the underlying Riemanian manifold
 from the algebraic theorem.
\end{abstract}

\maketitle

\begin{flushright} {{\it `And I cherish more than anything else\\  the Analogies, my most trustworthy masters.\\
 They know all the secrets of Nature.'}--Kepler}\\

 {{\it `Invert, always invert'}}-- Munger

\end{flushright}
\section{Introduction}

Berger's holonomy theorem and Simons' algebraic proof are fundamental results
in Riemannian geometry \cite{Berger,Simons}.  We begin with the geometric
statement.

\begin{theorem}[Berger--Simons]\label{thm:geometric}
Let $(M^n,g_M)$ be a connected Riemannian manifold, $n\geq2$, whose restricted
holonomy group acts irreducibly on a tangent space.  If this group is not
transitive on the unit sphere, then $M$ is locally symmetric and has rank at
least two.
\end{theorem}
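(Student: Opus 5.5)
The plan is to reduce Theorem~\ref{thm:geometric} to an algebraic statement about holonomy systems in the sense of Simons. Then I prove that statement by strong induction, and finally pass back to geometry.

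Fix $p\in M$, let $V=T_pM$, and let $G=\mathrm{Hol}^0_p$ act irreducibly on $V$ with Lie algebra $\g\subset\mathfrak{so}(V)$. Consider the family $\mathcal S=\{\tau^{-1}_\gamma\circ \Rm_{\gamma(1)}\circ\tau_\gamma\}$ of curvature tensors parallel-translated back to $p$ along piecewise smooth loops and paths. By Ambrose--Singer, every element of $\mathcal S$ takes values in $\g$ and satisfies the first Bianchi identity, and these values span $\g$. Moreover $\mathcal S$ is $G$-invariant. So $(V,\mathcal S,G)$ is an irreducible holonomy system. The algebraic theorem I will prove is this: an irreducible holonomy system that is not transitive on the unit sphere of $V$ is symmetric, meaning $g\cdot R=R$ for all $g\in G$ and $R\in\mathcal S$. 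Moreover $\mathcal S$ is then one-dimensional, spanned by a symmetric-space curvature tensor of rank $\ge2$.

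For the algebraic theorem I will use strong induction on $d=\dim\Span(G\cdot R)$. In outline:
\begin{itemize}
\item[(i)] Non-transitivity gives a unit vector $x$ whose orbit has positive codimension. Using the normal space to the orbit, construct a maximal flat $\mathfrak a$, i.e.\ a subspace on which all $R(a,b)$ vanish, with $\dim\mathfrak a\ge 2$, together with the root decomposition of $V$ under the abelian family $\{R(a,\cdot)a\}$.
\item[(ii)] Restrict to a common geodesic subspace to obtain a smaller curvature module. The kernel of the induced action is a proper invariant subspace, so the induction hypothesis applies to its span and shows that this kernel is fixed by $G$.
\item[(iii)] Apply the trace-vanishing lemma: an operator in an ideal annihilating the curvature module has $\tr\bigl(R(a,\cdot)\circ A\bigr)=0$. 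This forces the root centralizers to span $V$.
\item[(iv)] Their intersection recovers $\mathfrak a$. Irreducibility then forces $\g\cdot R=0$, i.e.\ $\nabla\Rm=0$ at the algebraic level.
\end{itemize}

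To return to geometry, apply the algebraic theorem at every point $q$, using the holonomy system at $q$ (which is parallel-translate conjugate to the one at $p$). The conclusion is that $G$ fixes $\Rm_q$ transported to $p$, and $\mathcal S$ is one-dimensional. Hence all transported curvature tensors equal $c(q)\,\Rm_p$ for a scalar function $c$. Next, differentiate along curves: $(\nabla_X\Rm)$ transported to $p$ equals $X(c)\,\Rm_p$. The second Bianchi identity applied to $X(c)\Rm$ forces $dc\wedge$-type relations. Since $\Rm_p$ has rank $\ge2$ and is irreducible, these relations force $dc=0$. So $c$ is constant and $\nabla\Rm=0$, and $M$ is locally symmetric. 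Its rank is at least two, since rank one symmetric spaces have transitive isotropy on the sphere.

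I expect step (ii)--(iii) to be the main obstacle: producing the proper invariant kernel under restriction and checking that the trace lemma's hypothesis (membership in an annihilating ideal) really holds there. For this I would first try the map $\g\to\mathrm{End}(\Span\mathcal S)$ and take its kernel ideal.
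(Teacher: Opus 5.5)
\textbf{Verdict: genuine gap.} Your outline (i)--(iv) follows the paper's algebraic strategy. You get the flat from the normal space of an orbit rather than from the paper's maximum argument, but the restriction-kernel induction on $\dim\Span G(\Rm)$ and the trace lemma are the same. The gap is in the return to geometry, which rests on the claim that $\Span\mathcal S$ is one-dimensional.

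Steps (i)--(iv) only give $\g\cdot Q=0$ for each transported tensor $Q$. They say nothing about how two $G$-invariant $\g$-valued curvature tensors compare. Schur's lemma does not settle this once $\g$ has a center or several simple ideals (e.g.\ $\mathfrak u(m)$, $\mathfrak{sp}(1)\oplus\mathfrak{sp}(m)$): there the equivariant symmetric maps $\Lambda^2V\to\g$ form a multi-parameter family, and only the Bianchi identity fixes the ratios.

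The missing ingredient is Kostant's argument, Theorem~\ref{thm:kostant}. For symmetric $Q$, the bracket $[x,y]=Q_{x,y}$ makes $\g\oplus V$ a Lie algebra. Irreducibility makes its Killing form on $V$ equal to $\lambda\ip{\cdot}{\cdot}$, and $\lambda=0$ forces $Q=0$. The $Q_{x,y}$ span $\g$, so $Q_{x,y}=-\lambda\mathcal T^{-1}\pi_{\g}(x\wedge y)$ with $\mathcal T$ depending only on $\g$, and $\Ric_Q=-\tfrac{\lambda}{2}\ip{\cdot}{\cdot}$.

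The paper then works through the Ricci tensor rather than through your function $c$. $\Ric$ is $H^0_p$-invariant, hence equal to $f\,g_M$. The contracted second Bianchi identity makes $f\equiv\beta$ constant; this needs $n\ge3$, and $n=2$ is vacuous since $\SO(2)$ is transitive. So $\lambda_Q=-2\beta$ for every transported $Q$, and all of them coincide with $\Rm_p$. You also need Corollary~\ref{cor:irreducible-closed} to know that $H^0_p$ is compact, since otherwise $(T_pM,Q,H^0_p)$ is not a holonomy system.

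Two further steps are misstated.

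\emph{The step $dc=0$.} Write $\nabla\Rm=dc\otimes\tilde{\Rm}$ with $\tilde{\Rm}$ parallel. The second Bianchi identity with $X=\nabla c$ and $Y,Z\perp\nabla c$ only shows that the hyperplane $\xi^\perp$, $\xi=\nabla c/|\nabla c|$, is flat for $\tilde{\Rm}$. ``Rank $\ge2$'' does not exclude this; you need rank $\le n-2$, and that again comes from Kostant. With $\Ric_{\tilde{\Rm}}=\mu\ip{\cdot}{\cdot}$ and $\mu\neq0$, flatness of $\xi^\perp$ gives $\tilde{\Rm}(\xi,w,\xi,w)=\mu|w|^2$ for $w\perp\xi$. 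Summing over an orthonormal basis of $\xi^\perp$ gives $\mu=\Ric_{\tilde{\Rm}}(\xi,\xi)=(n-1)\mu$, which is impossible for $n\ge3$.

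\emph{Step (iii).} The quantity that vanishes is not $\tr(\Rm(a,\cdot)\circ A)$. Lemma~\ref{lem:trace} says: if $\n$ is an ideal annihilating $Q$, $A=Q_{x,y}\in\n$ and $Ax=0$, then $\tr_{\g\oplus V}(L_A^2)=\tr(L_{Ax}L_y)=0$. Since $\tr_{\g\oplus V}(L_A^2)=-\norm{\ad_A}^2_{\mathrm{HS}}-\norm{A}^2_{\mathrm{HS}}$, this forces $A=0$. It is applied to $A=Q_{w,z}$ with $w\in W$ and $z$ orthogonal to every $E_{k,Q}$, $Q\in\cM$, so that $Q_{w,z}w=0$. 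This gives $z\in C(W)=W$, hence $z=0$.

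The membership $A\in\n$ that you flag as the obstacle comes from two results:
\begin{itemize}
\item Theorem~\ref{thm:restriction}: a cross curvature $P_{x,y}$ with $x\in E$, $y\in E^\perp$ maps $E$ into $E^\perp$, hence annihilates $\cM/K_E$.
\item Lemma~\ref{lem:inductive-kernel}: once induction makes $K_E$ pointwise $G$-fixed, $\norm{B(T)}^2=-\ip{B^2(T)}{T}=0$ for $B\in J_E$, so $J_E=\n$.
\end{itemize}
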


A \emph{Riemannian holonomy system} is a triple $S=(V,\Rm,G)$ consisting of a
Euclidean space $V$ of dimension $n\geq2$, a connected compact subgroup
$G\subset\SO(V)$, and an algebraic curvature tensor $\Rm$ such that
$\Rm_{x,y}\in\g:=\Lie(G)$ for all $x,y\in V$.  The system is
\emph{irreducible} if $G$ acts irreducibly on $V$, and \emph{symmetric} if
$g(\Rm)=\Rm$ for every $g\in G$.  The action on curvature tensors and our sign
conventions are specified in Section~\ref{sec:prelim} (cf. Section 2 of \cite{N}).

\begin{theorem}[Simons]\label{thm:usual}
An irreducible Riemannian holonomy system $(V,\Rm,G)$ is symmetric if $G$ is
not transitive on the unit sphere of $V$.
\end{theorem}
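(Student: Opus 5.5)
We argue by strong induction on $N(S):=\dim\Span\{g(\Rm): g\in G\}$, the dimension of the linear span $\mathcal{V}$ of the curvature orbit. Every element of $\mathcal{V}$ is an algebraic curvature tensor with values in $\g$, and $G$ acts on $\mathcal{V}$. The system is symmetric exactly when $\mathcal{V}=\R\Rm$, i.e.\ when $\g$ annihilates $\Rm$. So assume $G$ is not transitive on the unit sphere and that the theorem holds for every irreducible non-transitive system whose curvature orbit spans a space of smaller dimension.

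The first step is to reduce to a symmetric comparison model. We may replace $G$ by the group generated by the $\Rm'_{x,y}$ with $\Rm'\in\mathcal{V}$; this is a normal subgroup of $G$ that still acts irreducibly, since otherwise its invariant subspaces would give a Berger-type splitting. Consider the ideal $\mathfrak{k}\subset\g$ of elements that annihilate the curvature module $\mathcal{V}$. If $\mathfrak{k}$ is all of $\g$, we are done. Otherwise we use the trace-vanishing lemma, which applies to curvature operators lying in an ideal that annihilates the curvature module: for $\Rm'\in\mathcal{V}$ with $\Rm'_{x,y}\in\mathfrak{k}$ it gives $\tr(\Rm'_{x,y}\circ A)=0$ for the relevant $A\in\g$. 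This will be the tool for showing that certain subspaces fill $V$.

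Next we build flats. Non-transitivity gives a unit vector $v$ whose isotropy orbit is not the whole sphere. Take a maximal abelian subspace $\mathfrak{a}\subset V$ on which $\Rm$ vanishes (a flat) through a regular vector. We then decompose $V$ under the commuting family $\{\Rm_{a,\cdot}a\}_{a\in\mathfrak{a}}$ into root spaces, and form the root centralizers $V_\alpha$. Restricting to a common geodesic subspace $W$ (the centralizer of a root hyperplane) yields a holonomy system $(W,\Rm|_W,G_W)$ whose orbit span is strictly smaller. The kernel of the restriction map $\mathcal{V}\to\mathcal{V}_W$ is a proper $G$-invariant subspace. By induction each irreducible factor of the restricted system is symmetric or transitive, and this forces $G$ to fix this kernel pointwise. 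The elements of the kernel then take values in an annihilating ideal, so the trace lemma applies. It shows that the orthogonal complement of $\sum_\alpha V_\alpha$ pairs trivially with everything, hence $\sum_\alpha V_\alpha=V$.

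Finally, the intersection of the root centralizers recovers $\mathfrak{a}$. Each $V_\alpha$ is a symmetric subsystem by induction, and the curvature is determined there by $\Rm$ restricted to it. Spanning then shows that $\g$ acts trivially on $\Rm$, by applying $X\in\g$ and evaluating on the generating pieces. Irreducibility rules out a rank-one degenerate case: rank one together with irreducibility would force transitivity. Hence $\mathcal{V}=\R\Rm$, so the system is symmetric.

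The main obstacle is the induction step that makes the kernel $G$-fixed. We must check that the restricted system really has smaller $N$ and is again irreducible or splits nicely, and that the inductive conclusion of ``symmetric or transitive'' on each factor propagates back to fix the kernel. The plan is first to handle the transitive factors, which are spheres and so have curvature of constant type, separately via the explicit form of their curvature.
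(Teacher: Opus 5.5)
Your outline has the right architecture: orbit-span induction, flats, root centralizers, restriction kernel, trace lemma. But the step you flag as the main obstacle is a genuine gap, and the route you propose for it would not work.

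You apply the induction hypothesis to the restricted system on the geodesic subspace $E$. This fails for three reasons:
\begin{itemize}
\item There is no natural structure group on $E$.
\item The restricted system is reducible once $\dim W\geq2$. For $E=C(U_{k,Q})$, the restricted curvature has a nonzero common nullity containing $U_{k,Q}$.
\item Most importantly, knowing the restricted system is symmetric would constrain the image of $r_E$, not its kernel. So nothing in your argument forces $G$ to fix $K_E=\ker r_E$, and the detour through transitive factors and spheres does not help.
\end{itemize}
The missing idea is to run the induction on the \emph{same} space with the \emph{same} group. $K_E$ is a $G$-invariant subspace of $\cM$ (check this from \eqref{eq:four-action} and geodesy of $E$). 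It is proper because $r_E\neq0$. Each $T\in K_E$ is an algebraic curvature tensor with $T_{x,y}\in\g$, since every element of $\cM$ takes values in $\g$. Hence $(V,T,G)$ is an irreducible, non-transitive holonomy system with $\dim G(T)\leq\dim K_E<\dim\cM$, and induction gives $g(T)=T$ directly. Your preliminary replacement of $G$ by $G^\Rm$ rests on an unproved irreducibility claim, and it is unnecessary.

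The links downstream are also missing or wrong.
\begin{itemize}
\item It is not the kernel elements whose values lie in $\n=\Ann_{\g}(\cM)$. What you need is that $J_E=\{B\in\g:B(\cM)\subset K_E\}$ equals $\n$: if $B(T)$ is $G$-fixed then $B^2(T)=0$, and \eqref{eq:skew-norm} gives $B(T)=0$.
\item You also need that the cross-curvature operators $P_{x,y}$ with $x\in E$, $y\in E^\perp$ map $E$ into $E^\perp$, and so lie in $J_E$. These are the operators fed to the trace lemma.
\item The usable form of the trace lemma is: $A=Q_{x,y}\in\n$ and $Ax=0$ imply $A=0$. It is not a trace-orthogonality statement such as $\tr(\Rm'_{x,y}A)=0$.
\item To get $Q_{w,z}w=0$ for $w\in W$ and $z\perp\sum E$, flats and roots must be taken for every $Q\in\cM$, not only for $\Rm$. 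Then $Q_{w,z}=0$ for all $Q$, which gives $z\in C(W)=W$ and hence $z=0$.
\end{itemize}

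Your endgame also fails. You propose that each $V_\alpha$ is symmetric by induction and then evaluate $X(\Rm)$ on the pieces. But vanishing on each $E\times E\times E\times E$ does not control mixed terms, because $\sum E=V$ is not a direct sum, and induction is again unavailable on the pieces. The working argument is different. From $W^\perp=\sum E^\perp$ you get $P_{W,V}\subset\n$. Since $\n$ is $\Ad_G$-invariant and the translates $gW$ span $V$, this gives $\g^\Rm\subset\n$. Also $\g_\Rm\subset\n$ by Lemma~\ref{lem:ideal}, so $\n=\g$.

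Finally, non-transitivity enters through the flat lemma, which gives $\dim W\geq2$ and hence properness of $E_{k,Q}$. It is not used as a final rank-one exclusion.
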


For a holonomy system, write
\begin{equation}\label{eq:module}
 \cM:=G(\Rm)=\Span_{\R}\{g(\Rm):g\in G\},\qquad
 \g^\Rm:=\Span_{\R}\{Q_{x,y}:Q\in\cM,\ x,y\in V\}.
\end{equation}
Thus $G(\Rm)$ is a \emph{linear space} spanned by the orbit.
The subspace $\g^\Rm$ is an ideal of $\g$.  Its orthogonal complementary ideal
is denoted by $\g_\Rm$.  The connected analytic subgroup with Lie algebra
$\g^{\Rm}$ is denoted by $G^\Rm$; in the irreducible setting it is closed and
compact.  These facts shall be  proved in Section \ref{sec:prelim}.

\begin{theorem}[Simons]\label{thm:main}
Let $S=(V,\Rm,G)$ be an irreducible Riemannian holonomy system.  If $G^\Rm$ is
not transitive on the unit sphere, then $S$ is symmetric and has rank at
least two.
\end{theorem}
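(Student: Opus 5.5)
We argue by strong induction on $m:=\dim\cM$, quantifying over all irreducible holonomy systems, of every dimension $n$, whose $G^\Rm$ is not transitive on the unit sphere. The goal is to show $\cM=\R\Rm$; equivalently, $\g^\Rm$ acts trivially on $\cM$, since $G^\Rm$ is connected. Once this holds, $g(\Rm)=\Rm$ for all $g\in G$, because $\g_\Rm$ annihilates $\cM$ (the Section~\ref{sec:prelim} facts) and $G$ is generated by $G^\Rm$ and the subgroup with Lie algebra $\g_\Rm$. Two preliminary reductions are made. First, we may replace $G$ by $G^\Rm$: the system $(V,\Rm,G^\Rm)$ is still a holonomy system, it is irreducible (any $G^\Rm$-invariant subspace is, after averaging, seen to be $\g$-invariant, using that $\g_\Rm$ commutes with $\g^\Rm$ and Schur), and it has the same module $\cM$. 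Second, the base case $m=1$ is exactly symmetry.

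For the inductive step, the plan follows the abstract. Pick a maximal flat $\mathfrak a\subset V$, meaning a maximal subspace with $Q_{a,b}=0$ for all $Q\in\cM$ and $a,b\in\mathfrak a$. Non-transitivity of $G^\Rm$ should give $\dim\mathfrak a\ge2$ via the standard argument: otherwise every unit vector is conjugate to a regular direction, and the orbit map is a submersion onto the sphere. Consider the commuting family $\{Q_{a,\cdot}\}$ for $a\in\mathfrak a$ and decompose $V$ into root spaces $V_\lambda$ for $\mathfrak a$. For each root $\lambda$ let $\mathfrak a_\lambda=\ker\lambda\subset\mathfrak a$ and let $V^\lambda$ be its centralizer, i.e.\ the common null space of $Q_{a,\cdot}$ for $a\in\mathfrak a_\lambda$. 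Restricting $\cM$ to such a common geodesic subspace $W=V^\lambda$ gives a smaller holonomy system $(W,\cM|_W,G_W)$ whose curvature module has dimension $<m$, because the flat directions in $\mathfrak a_\lambda$ act trivially on it. The induction hypothesis, applied to the irreducible factors, forces the restricted curvature to be fixed by the holonomy action. Concretely, the kernel $K=\{Q\in\cM: Q|_W=0\}$ is a proper, $G_W$-invariant, and then fixed subspace.

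Next we invoke the trace-vanishing lemma. For $X$ in an ideal annihilating the curvature module, here the part of $\g^\Rm$ acting trivially on $\cM/K$, traces of the form $\tr(X\circ Q_{x,\cdot})$ vanish. We apply this to show that no nonzero vector of $V$ is orthogonal to all the $V^\lambda$. The argument is that the orthogonal complement of $\sum_\lambda V^\lambda$ would produce curvature operators lying in such an ideal with a nonzero trace pairing. Hence the root centralizers span $V$. Their intersection $\bigcap_\lambda V^\lambda$ is $\mathfrak a$ itself, by maximality. Since the restricted systems are symmetric, each $V^\lambda$ carries curvature annihilated by its holonomy, and the spanning property propagates this to $\g^\Rm\cdot\Rm=0$ on all of $V$. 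Irreducibility is used to exclude a nontrivial Euclidean factor and to conclude that the rank is $\dim\mathfrak a\ge2$.

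I expect the main obstacle to be the trace-lemma step: showing that the root centralizers span $V$. This requires correctly identifying the ideal that annihilates the module, and confirming that the induction genuinely applies, i.e.\ that the restricted systems have strictly smaller $\dim\cM$ and inherit non-transitivity or else are handled separately (for instance rank-one pieces). If that step fails, I would fall back on Simons' original analysis of the common zero-weight space.
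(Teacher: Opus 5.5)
\textbf{Verdict: genuine gap.} The proposal breaks down at the one point where the induction hypothesis is used.

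\textbf{Induction applied to the wrong system.} You apply the hypothesis to the restricted system $(V^\lambda,\cM|_{V^\lambda},G_W)$, or to its irreducible factors. Nothing makes this legitimate.
\begin{itemize}
\item The restriction map to $V^\lambda$ may be injective, so the restricted module need not have dimension $<m$. The fact that $\mathfrak a_\lambda$ lies in its nullity produces no kernel.
\item More seriously, the non-flat part of the restricted system has no reason to be nontransitive. In the model case of a symmetric space of rank $r\geq2$, $V^\lambda$ is tangent to $\R^{r-1}\times M_\lambda$ with $M_\lambda$ of rank one, and the holonomy of $M_\lambda$ is transitive on its sphere. This is exactly your worry about ``rank-one pieces''. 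Such pieces cannot be handled separately, because Simons' theorem says nothing about transitive groups.
\end{itemize}

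\textbf{The missing idea: keep the ambient space and the group.} Let $E$ be a proper geodesic subspace with $r_E\neq0$. Then $K_E=\ker r_E$ is invariant under all of $G$, not merely $G_W$. To see this, evaluate \eqref{eq:four-action} on four vectors of $E$; every term vanishes because $Q_{z,t}y=0$ for $y,z,t\in E$. Hence, for $T\in K_E$:
\begin{itemize}
\item the system $(V,T,G)$ is irreducible;
\item $G^T\subset G^\Rm$ is nontransitive;
\item $\dim G(T)\leq\dim K_E<m$.
\end{itemize}
So induction applies and gives $K_E\subset\cM^G$. Skew-adjointness then upgrades the annihilator $J_E$ of $\cM/K_E$ to $\n=\Ann_\g(\cM)$: $B(T)\in\cM^G$ forces $B^2(T)=0$, hence $B(T)=0$.

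\textbf{The trace step needs this upgrade, and your sketch skips it.} ``The part of $\g^\Rm$ acting trivially on $\cM/K$'' does not annihilate $Q$. The trace lemma requires an ideal $\n$ with $B(Q)=0$ for all $B\in\n$; this is what kills the error term $F_{x,y}$ in $[L_x,L_y]=L_{Q_{x,y}}+F_{x,y}$ on $\g\oplus V$. Its precise form is: if $A=Q_{x,y}\in\n$ and $Ax=0$, then $A=0$. To apply it to $A=Q_{w,z}$ with $w\in\mathfrak a$ and $z\perp\sum_\lambda V^\lambda$, you need two facts.
\begin{itemize}
\item $Aw=0$. This holds because $z$ is orthogonal to every Jacobi eigenvector with nonzero eigenvalue.
\item $A\in\n$. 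This comes from the cross-curvature fact $P_{x,y}\in J_E$ for $x\in E$, $y\in E^\perp$ (such $P_{x,y}$ maps $E$ into $E^\perp$), together with $J_E=\n$.
\end{itemize}
Then $z\in C(\mathfrak a)=\mathfrak a$, so $z=0$.

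\textbf{The final step.} ``Propagating restricted symmetry'' is not a mechanism. Instead, use $\bigcap_\lambda V^\lambda=\mathfrak a$. This needs two distinct centralizers and the duality lemma, not maximality alone. It gives $\mathfrak a^\perp=\sum_\lambda(V^\lambda)^\perp$, hence $P_{\mathfrak a,V}\subset\n$. Then $\Ad_G$-invariance of $\n$ and irreducibility give $\g^\Rm\subset\n$.

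\textbf{Smaller points.}
\begin{itemize}
\item Jacobi operators of different $Q\in\cM$ need not commute before symmetry is known, so there is no single root decomposition. You must diagonalize for each fixed $Q$.
\item You need the rank-one lemma so that $\ker\lambda$ is a hyperplane.
\item You must use the roots of every $Q\in\cM$, since the spanning argument has to kill $Q_{w,z}$ for all $Q$.
\item Take $\mathfrak a$ through some $a$ with $\Rm(a,b,a,b)\neq0$, so that a nonzero root exists.
\item Your submersion argument is a fine substitute for the maximization proof of the flat lemma, if phrased as ``a unit vector lying in no $2$-flat has an open, hence full, orbit''.
\item The reduction to $G^\Rm$ is unnecessary, and its averaging/Schur justification does not work as stated.
\end{itemize}
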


Here a flat is a subspace $W\subset V$ such that $Q_{u,v}=0$ for all
$u,v\in W$ and $Q\in G(\Rm)$; the rank is the largest dimension of such a
subspace.  In particular, when $\Rm=0$, the rank is $\dim V$.
Theorem~\ref{thm:usual} follows from Theorem~\ref{thm:main} because
$G^\Rm\subset G$.

The purpose of this paper is to prove Theorem~\ref{thm:main} by a single
strong induction on
\begin{equation}\label{eq:index}
 \kappa(S):=\dim G(\Rm).
\end{equation}
No induction on $\dim V$ or $\dim\g^\Rm$ is used.  The proof of Simons 
\cite{Sim} is a nested induction on $(\dim (V), \dim(\mathfrak{g}^\Rm))$. In \cite{N}, 
the author  produced an alternate argument using the induction on  $(\dim (V), \dim(G(\Rm)))$.
 The argument here uses Simons' flats and
root centralizers, together with the analytic proof of the flat lemma
suggested by N.~Wallach from \cite{N}.  The main new ingredient is the
trace-vanishing Lemma~\ref{lem:trace}.  

The analogy with  Lie theory is a guide to the constructions:
\begin{center}
\renewcommand{\arraystretch}{1.18}
\begin{tabular}{@{}p{0.43\textwidth}p{0.48\textwidth}@{}}
\toprule
{\color{blue}Lie theory} & {\color{blue}Holonomy system} \\
\midrule
{\color{blue} Cartan subalgebra $\mathfrak h$} & {\color{blue}Maximal flat $W$ }\\
{\color{blue} Root $\alpha$} & {\color{blue}Linear form $\ell_{k,Q}$ }\\
{\color{blue}Root hyperplane $\ker\alpha$} & {\color{blue}Hyperplane $U_{k,Q}=\ker\ell_{k,Q}$ }\\
{\color{blue} Centralizer of $\ker\alpha$} & {\color{blue}$E_{k,Q}=C(U_{k,Q})$} \\
{\color{blue}Nonzero root spaces}& {\color{blue}Nonzero common Jacobi eigendirections }\\
\bottomrule
\end{tabular}
\end{center}
These are analogies, not identifications with a root system before symmetry
has been proved.  In particular, the auxiliary space $\g\oplus V$ in
Lemma~\ref{lem:trace} is not assumed to be a Lie algebra.

Section~\ref{sec:geometric} retains the geometric consequence and the detailed 
Kostant argument from the algebraic approach in \cite{N}.  Sections~\ref{sec:flats}--\ref{sec:induction}
prove Theorem~\ref{thm:main}.    A different, geometric
proof using Euclidean submanifold and normal holonomy theory can be found in 
Olmos \cite{Olmos}. 

\section{Algebraic preliminaries}\label{sec:prelim}

\subsection{Algebraic curvature operators and the group action}
An algebraic curvature tensor is a linear map
$Q:\Lambda^2V\longrightarrow\mathfrak{so}(V)$ satisfying
\begin{align}
 Q(x,y,z,t)&:=\ip{Q_{x,y}z}{t}
             =Q(z,t,x,y),\label{eq:cur12-34}\\
 \mathfrak{S}(Q_{x,y}z)&=0, {\mathfrak S} \text { is the cyclic permutations  on the positions of }
 x, y, z.\label{eq:bianchi}
\end{align}
We denote by  $\cR(V)=S_B^2(\Lambda^2V)$ for the space of these tensors.
As in Section 2 of \cite{N}, the identification with symmetric operators on $\Lambda^2V$ uses 
\[
 (x\wedge y)z=\ip{y}{z}x-\ip{x}{z}y,\qquad
 \ip{A}{B}_{\mathfrak{so}(V)}=-\tfrac12\tr_V(AB).
\]
In particular,
\begin{equation}\label{eq:wedge-pairing}
 \ip{A}{x\wedge y}_{\mathfrak{so}(V)}=-\ip{Ax}{y},
 \qquad Q(x\wedge y)=-Q_{x,y}.
\end{equation}
Together it interprets $Q$ as an element in $S_B^2(\Lambda^2V)$. We use
the same letter for the tensor and the operator.

For $g\in G$,
\begin{equation}\label{eq:group-action}
 (g(Q))_{x,y}=gQ_{g^{-1}x,g^{-1}y}g^{-1}.
\end{equation}
The induced inner product on $\cR(V)$ is $G$-invariant.  If
$B\in\g$, differentiation gives
\begin{align}
 B(Q)_{x,y}&=[B,Q_{x,y}]-Q_{Bx,y}-Q_{x,By},\label{eq:infinitesimal}\\
 B(Q)(x,y,z,t)&=-Q(Bx,y,z,t)-Q(x,By,z,t)\notag\\
 &\hspace{13mm}-Q(x,y,Bz,t)-Q(x,y,z,Bt).\label{eq:four-action}
\end{align}
The endomorphism $Q\mapsto B(Q)$ of any invariant curvature subspace is
skew-adjoint.  Thus
\begin{equation}\label{eq:skew-norm}
 \norm{B(Q)}^2=-\ip{B^2(Q)}{Q}.
\end{equation}
Here $B^2(Q)$ means $B(B(Q))$.

\begin{lemma}[Sectional curvature determines the curvature operator]\label{lem:sectional}
An algebraic curvature tensor is determined by the values $Q(x,y,x,y)$.
In particular, if these values vanish for all $x,y$, then $Q=0$.
\end{lemma}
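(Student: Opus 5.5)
By linearity it suffices to prove the second assertion: if $Q\in\cR(V)$ satisfies $Q(x,y,x,y)=0$ for all $x,y\in V$, then $Q=0$. Applying this to the difference of two tensors with the same sectional values gives the first claim. The plan is the standard polarization argument. The only properties used are the symmetries \eqref{eq:cur12-34}, the antisymmetry of $Q_{x,y}$ in $x,y$ together with $Q_{x,y}\in\mathfrak{so}(V)$, and the Bianchi identity \eqref{eq:bianchi}.

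First consider the function $k(x,y)=Q(x,y,x,y)$. Replace $y$ by $y+w$ and expand. The terms quadratic in $y$ and in $w$ vanish by hypothesis, so the cross terms give $Q(x,y,x,w)+Q(x,w,x,y)=0$. By the pair symmetry \eqref{eq:cur12-34} these two terms are equal, hence $Q(x,y,x,w)=0$ for all $x,y,w$. Next replace $x$ by $x+z$ in this identity. The same cancellation leaves
\[
Q(x,y,z,w)+Q(z,y,x,w)=0 .
\]
Combined with antisymmetry in the first two slots, this says that $Q(x,y,z,w)=Q(y,z,x,w)$. In other words, $Q(\cdot,\cdot,\cdot,w)$ is invariant under the cyclic permutation of its first three arguments.

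Now apply the Bianchi identity \eqref{eq:bianchi}:
\[
Q(x,y,z,w)+Q(y,z,x,w)+Q(z,x,y,w)=0 .
\]
By the cyclic invariance just obtained, all three terms are equal, so $3Q(x,y,z,w)=0$. Since we work over $\R$, this gives $Q=0$.

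There is no serious obstacle here; the only step requiring care is the bookkeeping of signs in the two polarizations. In particular, the relation $Q(z,y,x,w)=-Q(x,y,z,w)$ has to be converted correctly into cyclic invariance using $Q(z,y,\cdot,\cdot)=-Q(y,z,\cdot,\cdot)$. If one prefers an explicit reconstruction instead of the vanishing argument, one can instead write down the classical formula expressing $6Q(x,y,z,w)$ as an alternating sum of values $k(x+z,y+w)$, $k(x+z,y)$, and so on. I would nevertheless use the short vanishing argument above, since only the statement of the lemma is needed later in the paper.
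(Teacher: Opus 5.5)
Your proof is correct and follows the same route as the paper. Both arguments polarize in $y$ to get $Q(x,y,x,w)=0$ and then polarize in $x$. Both then use the skew and pair symmetries to see that the three Bianchi terms $Q(x,y,z,w)$, $Q(y,z,x,w)$, $Q(z,x,y,w)$ coincide, so $3Q=0$. Your explicit conversion of $Q(z,y,x,w)=-Q(x,y,z,w)$ into cyclic invariance, via antisymmetry in the first pair, is the step the paper leaves implicit.
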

\begin{proof}
Polarization in $y$ gives $Q(x,y,x,z)=0$.  Polarizing in $x$ then gives
$Q(x,y,u,z)+Q(u,y,x,z)=0$. Together with (\ref{eq:cur12-34}) it implies that the three terms in the first
Bianchi identity, paired with $z$, are equal.  Their sum is zero, so each
term is zero. See \cite{Sakai} for a proof using calculus.
\end{proof}

\subsection{The curvature-generated ideal} The first elementary result is from \cite{Sim}. 
\begin{lemma}\label{lem:ideal}
For a holonomy system $(V,\Rm,G)$, the space $\g^\Rm$ in
\eqref{eq:module} is an ideal of $\g$.  If
\[
 \g=\g^\Rm\oplus\g_\Rm
\]
is its orthogonal decomposition with respect to the invariant inner product
on $\mathfrak{so}(V)$, then
\begin{equation}\label{eq:complement-annihilates}
 B(Q)=0\qquad\forall B\in\g_\Rm,\ Q\in\cM.
\end{equation}
Moreover, if $T\in\cM$, then
\begin{equation}\label{eq:submodule-ideal}
 G(T)\subset\cM,\qquad \g^T\subset\g^\Rm.
\end{equation}
\end{lemma}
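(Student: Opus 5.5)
We treat the three assertions in the order stated, starting with the claims about $\cM$, since the ideal property and the annihilation property both rest on the $G$-invariance of $\cM$.

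\emph{Step 1: invariance of $\cM$ and of $\g^\Rm$.} By construction $\cM=G(\Rm)$ is a $G$-invariant subspace of $\cR(V)$. Differentiating the action \eqref{eq:group-action} shows that $B(Q)\in\cM$ for all $B\in\g$ and $Q\in\cM$. Next we check that $Q_{x,y}\in\g$ for every $Q\in\cM$. For $Q=g(\Rm)$ this follows from \eqref{eq:group-action}: $gR_{g^{-1}x,g^{-1}y}g^{-1}\in\Ad_g(\g)=\g$, and linearity extends it to all of $\cM$. Hence $\g^\Rm\subset\g$.

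\emph{Step 2: $\g^\Rm$ is an ideal.} Rearranging \eqref{eq:infinitesimal} gives, for $B\in\g$ and $Q\in\cM$,
\[
[B,Q_{x,y}]=B(Q)_{x,y}+Q_{Bx,y}+Q_{x,By}.
\]
Each term on the right lies in $\g^\Rm$: the first because $B(Q)\in\cM$ by Step 1, the other two by definition. Therefore $[\g,\g^\Rm]\subset\g^\Rm$. Since the inner product on $\mathfrak{so}(V)$ is $\Ad$-invariant, the orthogonal complement $\g_\Rm$ of $\g^\Rm$ inside $\g$ is also an ideal. It follows that $[\g_\Rm,\g^\Rm]\subset\g_\Rm\cap\g^\Rm=0$.

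\emph{Step 3: proof of \eqref{eq:complement-annihilates}.} This is the main step. Fix $B\in\g_\Rm$ and $Q\in\cM$. By Step 2, $[B,Q_{x,y}]=0$, so \eqref{eq:infinitesimal} reduces to
\[
B(Q)_{x,y}=-Q_{Bx,y}-Q_{x,By}.
\]
In four-tensor form, using \eqref{eq:wedge-pairing}, we have
\[
Q(Bx,y,z,t)=-\ip{Q_{z,t}}{Bx\wedge y}\cdot(\text{const}).
\]
Rather than push this through, we pair directly:
\[
\ip{Q_{z,t}Bx}{y}=-\ip{BQ_{z,t}x}{y},
\]
because $B$ commutes with $Q_{z,t}$ and is skew. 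Now expand $B(Q)(x,y,z,t)$ via \eqref{eq:four-action} and apply \eqref{eq:cur12-34} to move $B$ onto the $(z,t)$ slots. The first two terms become
\[
-\ip{Q_{z,t}Bx}{y}-\ip{Q_{z,t}x}{By}=-\ip{(Q_{z,t}B-BQ_{z,t})x}{y}=0.
\]
The same commutation kills the last two terms. Hence $B(Q)=0$.

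\emph{Step 4: proof of \eqref{eq:submodule-ideal}.} For $T\in\cM$, every $g(T)$ lies in $\cM$ by invariance, so $G(T)\subset\cM$. Consequently $\g^T$ is spanned by the operators $P_{x,y}$ with $P\in\cM$, and so $\g^T\subset\g^\Rm$.

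The only real obstacle is Step 3: we must keep track of the signs and of \eqref{eq:cur12-34} so that the commutation $[B,Q_{z,t}]=0$ cancels the slot derivatives in pairs. Steps 1, 2 and 4 are formal.
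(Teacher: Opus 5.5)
Your proof is correct and is essentially the paper's argument. Your Step 3 is the four-tensor form of the paper's operator identity $B(Q)=\ad_B\cdot Q-Q\cdot\ad_B$: the paper gets $Q\cdot\ad_B=0$ from $\ad_B\cdot Q=0$ by transposing, and you get the same cancellation of the $(x,y)$-slot terms by pair symmetry. One small slip: the displayed identity $\ip{Q_{z,t}Bx}{y}=-\ip{BQ_{z,t}x}{y}$ has the wrong sign (commutation gives $+$), but your final chain $-\ip{Q_{z,t}Bx}{y}-\ip{Q_{z,t}x}{By}=-\ip{(Q_{z,t}B-BQ_{z,t})x}{y}=0$ does not use it and is right.
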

\begin{proof}
For $B\in\g$ and $Q\in\cM$, equation~\eqref{eq:infinitesimal} gives
\[
 [B,Q_{x,y}]=B(Q)_{x,y}+Q_{Bx,y}+Q_{x,By}\in\g^\Rm,
\]
because $B(Q)\in\cM$.  Hence $\g^\Rm$ is an ideal.  Its orthogonal
complement (in $\g$, a compact Lie algebra) is also an ideal, and $[\g_\Rm,\g^\Rm]=0$.

For $B\in\g_\Rm$, regard $Q$ as a symmetric operator on $\Lambda^2V$.
Its image lies in $\g^\Rm$, so $\ad_B\cdot Q\in \g_Rm\cap \g^\Rm$ thus $=0$.  
Since $Q$ is symmetric
and $\ad_B$ is skew-adjoint, we have $0=(\ad_B\cdot Q)^{tr}=-Q \cdot \ad_B$.  The operator form of
\eqref{eq:infinitesimal} is
$B(Q)=\ad_B\cdot Q-Q\cdot\ad_B$, proving
\eqref{eq:complement-annihilates}.
Finally, $\cM$ is $G$-invariant and contains $T$, which proves both
inclusions in~\eqref{eq:submodule-ideal}.
\end{proof}
The result below is standard. We include its proof for the completeness.
\begin{lemma}[Closedness of subgroups whose Lie algebras are ideals]\label{lem:closed}
Let $G\subset\SO(V)$ be connected and compact and act irreducibly on $V$.
Every ideal of $\g$ integrates to a closed connected subgroup of $G$.
In particular, $G^\Rm$ is compact.
\end{lemma}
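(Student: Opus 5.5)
The plan is to use the structure theory of compact Lie algebras. Since $G$ is compact, $\g$ carries the $\Ad(G)$-invariant inner product $\ip{A}{B}=-\tfrac12\tr_V(AB)$ from Section~\ref{sec:prelim}. Hence $\g$ splits orthogonally as
\[
\g=\mathfrak z\oplus\g_1\oplus\cdots\oplus\g_m,
\]
where $\mathfrak z$ is the center and the $\g_i$ are simple ideals. Every ideal $\mathfrak a\subset\g$ then has the form $\mathfrak a=\mathfrak z'\oplus\bigoplus_{i\in I}\g_i$ for some subspace $\mathfrak z'\subset\mathfrak z$ and some index set $I$. The reason is that the projection of $\mathfrak a$ onto each simple factor is an ideal of that factor, and $[\mathfrak a,\g_i]\subset\mathfrak a$ forces $\g_i\subset\mathfrak a$ whenever that projection is nonzero. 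The argument therefore reduces to two facts: the semisimple part integrates to a closed subgroup, and the central part is tame.

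\emph{Semisimple part.} The analytic subgroup $H$ with Lie algebra $[\g,\g]=\bigoplus\g_i$ is closed. The cleanest route is that a connected semisimple subgroup with compact Lie algebra has finite center, and its universal cover is compact by Weyl's theorem, so $H$ is the image of a compact group and hence compact. The same argument applies to the analytic subgroup of any sum $\bigoplus_{i\in I}\g_i$, which is again semisimple.

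\emph{Central part; this is where irreducibility enters.} By Schur's lemma, the commutant of the irreducible real representation $G\subset\SO(V)$ is a real division algebra $\mathbb R$, $\mathbb C$ or $\mathbb H$. The center $\mathfrak z$ lies in this commutant and consists of skew elements, so it lies in the skew part of a commutative subalgebra, which is either $\mathbb R$ or $\mathbb C$. This gives $\dim\mathfrak z\le1$, and when $\mathfrak z\neq0$ it is spanned by a complex structure $J$. The subgroup $\exp(tJ)$ satisfies $J^2=-1$, so it is a circle and therefore closed. Consequently $\mathfrak z'$ is either $0$ or $\mathfrak z$, and its analytic subgroup is either trivial or a circle.

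\emph{Conclusion.} The analytic subgroup for $\mathfrak a$ is the product $Z'\cdot H_I$ of two commuting compact subgroups. It is the image of the compact group $Z'\times H_I$ under multiplication, hence compact and closed. Applying this with $\mathfrak a=\g^\Rm$, which is an ideal by Lemma~\ref{lem:ideal}, gives the compactness of $G^\Rm$.

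The main obstacle is the semisimple step, namely closedness of semisimple analytic subgroups. If one wants to avoid citing Weyl's theorem, an alternative is the following. The Killing form of $\g_I$ is negative definite, so $\ad(\g_I)=\operatorname{Der}(\g_I)$. The group $\operatorname{Aut}(\g_I)$ is a closed subgroup of $\SO(\g_I)$ whose identity component has Lie algebra $\ad(\g_I)$, so that component is compact. The compact group $G$ maps onto it, and one then pulls closedness back through the finite-center covering.
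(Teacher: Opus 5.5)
Your proof is correct and follows the same route as the paper. You split $\g$ into its center and simple ideals, use irreducibility to get $\dim\mathfrak z(\g)\le 1$, and realize the analytic subgroup of an ideal as the product of a circle (or the trivial group) with the compact groups of the chosen simple factors. For the center bound, your Schur division-algebra argument replaces the paper's direct observation that $A^2$ and $AB$ are symmetric and commute with $G$, hence are scalar.

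Your appeal to Weyl's theorem makes explicit the compactness of the semisimple factors, which the paper only asserts. The closing alternative sketch is unnecessary and, as written, already presupposes that the center is finite, which is the substance of Weyl's theorem.
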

\begin{proof}
First $\dim\mathfrak z(\g)\leq1$.  Indeed, if
$0\neq A\in\mathfrak z(\g)$, the symmetric endomorphism $A^2$ commutes
with $G$.  Its eigenspaces are invariant, so irreducibility gives
$A^2=-a^2\operatorname{id}$ with $a>0$.  If
$B\in\mathfrak z(\g)$, then $AB$ is likewise symmetric and commutes
with $G$, hence is scalar.  Thus $B$ is a scalar multiple of $A$.

Use the elementary decomposition of a compact Lie algebra into its center
and compact simple ideals:
\[
 \g=\mathfrak z(\g)\oplus\g_1\oplus\cdots\oplus\g_s.
\]
An ideal is a sum of some $\g_j$ and a subspace of the center.  Since the
center has dimension at most one, this last subspace is either zero or the
whole center.  The simple factors integrate to compact connected subgroups,
and the connected central subgroup is a torus of dimension at most one.
Their product is compact and with the prescribed Lie algebra.
Uniqueness of the connected analytic subgroup proves the assertion. 
\end{proof}
 This proof can also be found in 
\cite[Appendix~5]{KN}, so is the next result.
 We include a simple proof for completeness. 
\begin{corollary}\label{cor:irreducible-closed}
A connected Lie subgroup $H\subset\SO(V)$ acting irreducibly on $V$ is
closed and compact.
\end{corollary}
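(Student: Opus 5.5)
The plan is to reduce to Lemma~\ref{lem:closed}, applied to the closure of $H$ in $\SO(V)$. Let $\bar H$ be the closure of $H$ in the compact group $\SO(V)$. It is a closed subgroup, hence a compact Lie group by Cartan's closed subgroup theorem, and it is connected because $H$ is. Since $H\subset\bar H$, the group $\bar H$ also acts irreducibly on $V$. So Lemma~\ref{lem:closed} applies to $\bar H$ with Lie algebra $\bar{\mathfrak h}$. The key step is to show that $\mathfrak h:=\Lie(H)$ is an ideal of $\bar{\mathfrak h}$.

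For the ideal property, note that $\mathfrak h$ is $\Ad(H)$-invariant. The set $\{g\in\SO(V):\Ad(g)\mathfrak h=\mathfrak h\}$ is closed, being a stabilizer of a point in a Grassmannian, and it contains $H$. Hence it contains $\bar H$. Differentiating gives $[\bar{\mathfrak h},\mathfrak h]\subset\mathfrak h$, so $\mathfrak h$ is an ideal of $\bar{\mathfrak h}$.

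Lemma~\ref{lem:closed} now says that the ideal $\mathfrak h$ integrates to a closed connected subgroup $K\subset\bar H$. Both $K$ and $H$ are connected Lie subgroups with Lie algebra $\mathfrak h$, so uniqueness of the connected analytic subgroup gives $H=K$. Therefore $H$ is closed. Being closed in the compact group $\SO(V)$, it is compact.

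The main point needing care is the use of Lemma~\ref{lem:closed} itself. It rests on $\dim\mathfrak z(\bar{\mathfrak h})\le1$, which uses irreducibility of $\bar H$, and this is inherited from $H$ as noted above. The remaining facts are standard: closed subgroups are Lie subgroups, and the invariance of $\mathfrak h$ passes to the closure. An alternative route bypasses the lemma. Irreducibility makes $\mathfrak h$ reductive with center of dimension at most one, by the argument in the proof of Lemma~\ref{lem:closed}. Then $[\mathfrak h,\mathfrak h]$ is compact semisimple and so integrates to a compact subgroup, and the one-parameter central subgroup is closed because it is either trivial or a complex structure's circle, $\exp(tA)$ with $A^2=-a^2\operatorname{id}$. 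I would present the closure argument as the main proof, since it uses Lemma~\ref{lem:closed} directly.
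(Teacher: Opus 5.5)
Your proof is correct and follows the paper's own argument: pass to the closure $\overline H$, use that invariance of $\Lie(H)$ under $\Ad$ passes to $\overline H$ (by continuity, or, as you say, as a closed stabilizer) to see that $\Lie(H)$ is an ideal of $\Lie(\overline H)$, then apply Lemma~\ref{lem:closed} to $\overline H$ together with uniqueness of connected analytic subgroups. The alternative route you sketch is a valid aside, but it is not needed.
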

\begin{proof}
Its closure $\overline H$ is compact, connected and irreducible.
Since $\Ad_h\Lie(H)=\Lie(H)$ for $h\in H$, continuity gives the same
invariance under $\overline H$.  Thus $\Lie(H)$ is an ideal of
$\Lie(\overline H)$.  Lemma~\ref{lem:closed} shows that the corresponding
analytic subgroup, namely $H$, is closed in $\overline H$.
\end{proof}

\section{From algebraic symmetry to parallel curvature}\label{sec:geometric}

This section is from Section 3 of \cite{N} explaining the geometric consequence of
Theorem~\ref{thm:usual}.  It is logically separated from the induction in
Sections~\ref{sec:flats}--\ref{sec:induction}. We include it for the completeness.

\subsection{Kostant's argument for a symmetric system}
For a compact algebra $\g\subset\mathfrak{so}(V)$, let
\[
 B(A_1,A_2)=K_{\g}(A_1,A_2)-2\ip{A_1}{A_2}_{\mathfrak{so}(V)}.
\]
Here $K(\cdot, \cdot)$ is Killing form of $\g$, which is semi-negative definite due 
to that $\mathfrak{g}$ is compact.  Let $\mathcal T:\g\to\g$ be defined by
$B(A_1,A_2)=\ip{\mathcal T A_1}{A_2}_{\mathfrak{so}(V)}$, and let
$\pi_{\g}:\Lambda^2V\to\g$ be orthogonal projection. $\mathcal T$ is
 negative definite since $B<0$.

\begin{theorem}[Kostant; cf.\ \cite{Simons,Sakai, N}]\label{thm:kostant}
For an irreducible symmetric holonomy system $(V,Q,G)$ there exists
$\lambda\in\R$ such that
\begin{equation}\label{eq:kostant}
 Q_{x,y}=-\lambda\mathcal T^{-1}\pi_{\g}(x\wedge y).
\end{equation}
If $Q\neq0$, then $\lambda\neq0$, and
\begin{align}
 Q(x,y,z,t)&=\lambda^{-1}B(Q_{x,y},Q_{z,t}),\label{eq:kostant-pair}\\
 \Ric_Q&=-\tfrac{\lambda}{2}\ip{\cdot}{\cdot},\label{eq:kostant-ricci}\\
 Q(x,y,x,y)=0&\quad\Longleftrightarrow\quad Q_{x,y}=0.\label{eq:kostant-zero}
\end{align}
Here $\Ric_Q(x,y)=\sum_j Q(e_j,x,e_j,y)$.
\end{theorem}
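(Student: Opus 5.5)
The plan is to view a symmetric system as a $G$-equivariant map and then apply Schur's lemma. Since $g(Q)=Q$ for all $g\in G$, the map $\Phi:\Lambda^2V\to\g$, $x\wedge y\mapsto Q_{x,y}$, is $G$-equivariant. Because $Q$ is symmetric as an operator on $\Lambda^2V$ and has image in $\g$, it vanishes on $\g^\perp$, so $\Phi=\Phi\circ\pi_{\g}$. We then set $\Psi:=\Phi|_{\g}:\g\to\g$, which is symmetric with respect to $\ip{\cdot}{\cdot}_{\mathfrak{so}(V)}$ and commutes with $\ad$. The goal is to show that $\Psi=-\lambda\mathcal T^{-1}$, which is exactly \eqref{eq:kostant}, since $Q(x\wedge y)=-Q_{x,y}$ and the signs can be absorbed into $\lambda$.

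\textbf{Step 1: use the Bianchi identity.} Let $A\in\g$ and write the condition $A(Q)=0$ in the infinitesimal form \eqref{eq:infinitesimal}:
\[
[A,Q_{x,y}]=Q_{Ax,y}+Q_{x,Ay}.
\]
Next use the first Bianchi identity \eqref{eq:bianchi}, in Kostant's way. Define a bracket on $\g\oplus V$ by $[x,y]:=-Q_{x,y}$ and $[A,x]:=Ax$. The invariance condition above, together with Bianchi, gives the Jacobi identity, so this is a Lie algebra $\mathfrak k$. One then compares the Killing form of $\mathfrak k$ restricted to $\g$ with $K_{\g}$ and with $\tr_V$. For $A\in\g$ we get
\[
\tr_{\mathfrak k}(\ad_A\ad_{A'})=K_{\g}(A,A')+\tr_V(AA')=B(A,A').
\]
For $x,y\in V$, the Killing form of $\mathfrak k$ evaluated on $(x,y)$ is $2\tr_{\g}(\ad_x\ad_y|\ldots)$, and this yields the identity
\[
K_{\mathfrak k}(x,y)=2\,B(\text{something}).
\]
Rather than push this heavy computation through in full, the fallback is to use Schur's lemma together with irreducibility of $V$, as in Step 2.

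\textbf{Step 2: show $\mathcal T\Psi$ is scalar.} The operator $\mathcal T$ commutes with $\ad\g$, since both $K_{\g}$ and the trace form are invariant. Hence $\mathcal T\Psi$ is a $G$-equivariant endomorphism of $\g$, but a priori it need not be scalar on each simple factor. To pin it down, use the invariance of the Killing form of $\mathfrak k$: $K_{\mathfrak k}([x,y],A)=K_{\mathfrak k}(x,[y,A])$. This gives
\[
-B(Q_{x,y},A)=-K_{\mathfrak k}(x,Ay).
\]
Since $K_{\mathfrak k}|_V$ is a $G$-invariant symmetric form on the irreducible space $V$, it equals $\mu\ip{\cdot}{\cdot}$ for some $\mu$. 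Therefore
\[
B(Q_{x,y},A)=\mu\ip{Ay}{x}=\mu\ip{A}{x\wedge y}_{\mathfrak{so}(V)}.
\]
Reading this as $\ip{\mathcal T\Psi\pi_{\g}(x\wedge y)}{A}$ up to sign gives \eqref{eq:kostant} with $\lambda=\pm\mu$.

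\textbf{Step 3: the remaining identities.} If $Q\neq0$ then $\lambda\neq0$, since otherwise $Q_{x,y}=0$ for all $x,y$. For \eqref{eq:kostant-pair}, write $Q(x,y,z,t)=\ip{Q_{x,y}z}{t}=-\ip{Q_{x,y}}{z\wedge t}$, then substitute
\[
\pi_{\g}(z\wedge t)=-\lambda^{-1}\mathcal T Q_{z,t}
\]
and use the symmetry of $\mathcal T$. The identity \eqref{eq:kostant-zero} follows because $B$ is definite: $Q(x,y,x,y)=\lambda^{-1}B(Q_{x,y},Q_{x,y})$ vanishes only if $Q_{x,y}=0$. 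For Ricci, trace \eqref{eq:kostant} over $x=z=e_j$ and use that $\sum_j \tr(A e_j\wedge\cdot)$ reduces to a Casimir-type computation; by irreducibility $\Ric_Q$ is a scalar, and the constant is fixed by tracing $K_{\mathfrak k}|_V$.

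The main obstacle is Step 1, namely verifying the Jacobi identity on $\g\oplus V$, and keeping the sign and normalization conventions straight so that the constant comes out exactly as $-\lambda/2$.
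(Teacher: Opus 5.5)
Your argument for \eqref{eq:kostant}, \eqref{eq:kostant-pair} and \eqref{eq:kostant-zero} is correct, and it is organized more efficiently than the paper's.

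\textbf{Comparison with the paper.} The paper applies invariance of the Killing form of $\cJ$ only to vectors, obtaining $\lambda\ip{Q_{x,y}z}{t}=B(Q_{x,y},Q_{z,t})$. It then needs a separate argument for $\lambda\neq0$. It also needs a proof that the $Q_{x,y}$ span $\g$, via the $B$-orthogonal complement of their span, before $\mathcal T$ can be inverted away. You instead pair $[x,y]$ with an \emph{arbitrary} $A\in\g$. From $K_{\mathfrak k}([x,y],A)=K_{\mathfrak k}(x,[y,A])$, $K_{\mathfrak k}|_{\g}=B$ and $K_{\mathfrak k}|_{V}=\mu\ip{\cdot}{\cdot}$ you get $\ip{\mathcal T Q_{x,y}}{A}=\mu\ip{\pi_{\g}(x\wedge y)}{A}$ for all $A\in\g$. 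Hence $\mathcal T Q_{x,y}=\mu\,\pi_{\g}(x\wedge y)$ directly. Consequences:
\begin{itemize}
\item \eqref{eq:kostant} holds with exactly $\lambda=-\mu$, not merely $\pm\mu$.
\item $\lambda\neq0$ for $Q\neq0$ is immediate.
\item The spanning of $\g$ by the $Q_{x,y}$ becomes a corollary rather than a needed step.
\end{itemize}
\eqref{eq:kostant-pair} and \eqref{eq:kostant-zero} then follow exactly as in your Step 3. The $\Phi,\Psi$ and Schur preliminaries are superfluous. The Jacobi identity you single out as the main obstacle is routine: Bianchi handles three vectors, $A(Q)=0$ handles one element of $\g$ and two vectors, and either sign of $[x,y]$ works.

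\textbf{The gap: \eqref{eq:kostant-ricci} is not established.} Schur's lemma only shows that $\Ric_Q$ is some multiple of $\ip{\cdot}{\cdot}$. Substituting \eqref{eq:kostant} turns $\Ric_Q$ into $\mu$ times the scalar by which the $B$-Casimir of $\g$ acts on $V$. Neither Schur nor a trace count pins that scalar down to $\tfrac12$; it is a consequence of the symmetric structure, not of representation theory alone.

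What is needed is precisely the computation you abandoned in Step 1, where the claimed shape $2B(\text{something})$ is also wrong. In your convention $\ad_x$ is off-diagonal on $\g\oplus V$, with $\alpha_x:\g\to V$, $A\mapsto -Ax$, and $\beta_x:V\to\g$, $z\mapsto -Q_{x,z}$. Since $\tr_{\g}(\beta_x\alpha_x)=\tr_V(\alpha_x\beta_x)$,
\[
K_{\mathfrak k}(x,x)=2\tr_V\bigl(z\mapsto Q_{x,z}x\bigr)=2\sum_j Q(x,e_j,x,e_j)=2\Ric_Q(x,x).
\]
Combined with $K_{\mathfrak k}|_V=\mu\ip{\cdot}{\cdot}$ and $\lambda=-\mu$, this gives $\Ric_Q=-\tfrac{\lambda}{2}\ip{\cdot}{\cdot}$. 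This is the paper's final step transcribed to your sign of $[x,y]$; the paper's bracket $[x,y]=Q_{x,y}$ gives $K|_V=\lambda$ and $K(x,x)=-2\Ric_Q(x,x)$. As long as you leave the relation between $\lambda$ and $\mu$ as $\pm$, the Ricci constant cannot come out with the correct sign.
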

\begin{proof}
For $Q=0$ take $\lambda=0$ in~\eqref{eq:kostant}.  

Suppose $Q\neq0$.
On $\cJ=\g\oplus V$ recall the Cartan construction 
\[
 [A_1,A_2]_{\cJ}=[A_1,A_2],\qquad [A,x]_{\cJ}=A(x),\qquad [x,y]_{\cJ}=Q_{x,y}.
\]
The Jacobi identity follows from the first Bianchi identity and
$A(Q)=0$ for every $A\in\g$.  Thus  $\cJ$  is
a Lie algebra.  Denote its Killing form by $B_{\cJ}$.
A direct calculation (cf. Section 3 of \cite{N} for details) gives
\[
 B_{\cJ}|_{\g}(\cdot, \cdot)=B(\cdot, \cdot),\qquad B_{\cJ}(\g,V)=0.
\]
The restriction to $V$ is a $G$-invariant symmetric bilinear form, so
irreducibility gives $B_{\cJ}|_{V\times V}=\lambda\ip{\cdot}{\cdot}$. In the
 below we simply use $B$ for $B_{\cJ}$. If $\lambda=0$, invariance yields
\[
 B(Q_{x,y},Q_{x,y})
 =B_{\cJ}([x,y],[x,y])
 =B_{\cJ}(x,[y,[x,y]])=0.
\]
Negative definiteness of $B$ on $\g$ imply $Q=0$, a contradiction.

Similarly 
\[
 \lambda\ip{Q_{x,y}z}{t}
 =B_{\cJ}([[x,y],z],t)
 =B(Q_{x,y},Q_{z,t}),
\]
which proves~\eqref{eq:kostant-pair} and~\eqref{eq:kostant-zero}.
Let $\mathfrak k=\Span_{\mathbb{R}}\{Q_{x,y}\}$.  By Lemma \ref{lem:ideal} it is an ideal of $\g$.
If $\mathfrak{k}^\perp$ is $B$-orthogonal to $\mathfrak k$, 
then $\forall y \in V, A\in \mathfrak{k}^\perp$, 
\[
 \lambda\norm{A(y)}^2
 =B_{\cJ}([A,y],[A,y])
 =B_{\cJ}(A,[y,[A,y]])
 =B(A,Q_{y,Ay})=0.
\]
Thus $\mathfrak{k}^\perp=0$, so $\mathfrak k=\g$.  Combining
\eqref{eq:kostant-pair} with~\eqref{eq:wedge-pairing} gives
\[
 \ip{Q_{x,y}}{\mathcal T Q_{z,t}}_{\mathfrak{so}(V)}=\lambda Q(x,y,z,t)=\lambda \ip{Q_{x, y}z}{t}
 =-\lambda\ip{Q_{x,y}}{\pi_{\g}(z\wedge t)}_{\mathfrak{so}(V)}.
\]
Since the $Q_{x,y}$ span $\g$, formula~\eqref{eq:kostant} follows.

Finally, $\ad_x$  on $\g\oplus V$ is off diagonal, decomposing into maps $\alpha_x:
A\mapsto-A(x)$ and $\beta_x :z\mapsto Q_{x,z}$.  
Direct calculation gives $\tr_{\mathfrak{g}}(\beta_x \alpha_x)
 =\tr_{V}(\alpha_x,\beta_x)$. Hence
\[
 B_{\cJ}(x,x)=\tr_{\cJ}(\ad_x^2)=\tr_{\mathfrak{g}}(\beta_x \alpha_x)
 +\tr_{V}(\alpha_x,\beta_x)=2\tr_{V}(\alpha_x,\beta_x)
 =-2\sum_j\ip{Q_{x,e_j}x}{e_j}=-2\Ric_Q(x,x).
\]
This proves~\eqref{eq:kostant-ricci}.
\end{proof}

\subsection{Curvature and holonomy} This part is
 Lemma 2.2  and the second part of Section 3 of \cite{N}.
For the Levi--Civita connection $D$ we use the convention
\begin{equation}\label{eq:geometric-sign}
 \Rm_{X,Y}=-D_XD_Y+D_YD_X+D_{[X,Y]}.
\end{equation}
Let $H_p^0$ be the restricted holonomy group and $\mathfrak h_p$ its Lie
algebra.  For a path $\gamma$ from $q$ to $p$, write $\tau_\gamma$ for
parallel transport and $\tau_\gamma(R_q)$ for the transported tensor.

\begin{lemma}\label{lem:curvature-inclusion}
For every such path and every $x,y\in T_pM$,
\[
 \bigl(\tau_\gamma(\Rm_q)\bigr)_{x,y}\in\mathfrak h_p.
\]
\end{lemma}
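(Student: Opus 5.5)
This is essentially the Ambrose--Singer inclusion, and I would prove it by a holonomy-bundle argument. The key geometric input is a standard fact about how the holonomy algebra changes under parallel transport along the loop-composed path $\gamma$.

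\emph{Step 1: the pointwise statement at $q$.} First show that $(\Rm_q)_{u,v}\in\mathfrak h_q$ for all $u,v\in T_qM$. Take a small coordinate parallelogram at $q$ spanned by $u,v$, with sides $s$ and $t$, and let $\sigma_{s,t}$ be the loop at $q$ obtained by going around it. The parallel transport $P(\sigma_{s,t})$ lies in $H^0_q$, since the loop is null-homotopic, and $P(\sigma_{0,0})=\mathrm{id}$. The standard expansion of parallel transport around a small loop gives
\[
P(\sigma_{s,t})=\mathrm{id}\pm st\,(\Rm_q)_{u,v}+o(st).
\]
The sign depends on the convention \eqref{eq:geometric-sign}, but it is irrelevant because $\mathfrak h_q$ is a linear space. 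Hence $(\Rm_q)_{u,v}=\lim_{s,t\to0}(P(\sigma_{s,t})-\mathrm{id})/(\pm st)$. Since $H^0_q$ is a Lie subgroup, a smooth curve in it (for instance $t\mapsto P(\sigma_{t,t})$, reparametrized in $t^2$) has tangent vector in $\mathfrak h_q$. Rather than worry about smoothness in $\tau=t^2$, I would use a cleaner route: $H^0_q$ is a connected Lie subgroup of $\SO(T_qM)$. Since $\mathfrak h_q$ is only known to be the Lie algebra of a possibly nonclosed subgroup, I would argue via closedness locally: choose a submanifold chart. Alternatively, use Corollary~\ref{cor:irreducible-closed} when irreducible. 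In general I would note that loops $\sigma_{s,t}$ depend smoothly on parameters, so $(s,t)\mapsto P(\sigma_{s,t})$ is a smooth map into the ambient group with image in $H^0_q$. A smooth map into $\SO$ with values in an initial (weakly embedded) Lie subgroup is smooth into that subgroup, which is the standard fact that holonomy subgroups are initial. Its mixed second derivative at $(0,0)$, taken where the first derivatives vanish, therefore lies in $\mathfrak h_q$.

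\emph{Step 2: transport to $p$.} For any path $\gamma$ from $q$ to $p$, conjugation gives $H^0_p=\tau_\gamma H^0_q\tau_\gamma^{-1}$, because a null-homotopic loop at $q$ conjugated by $\gamma$ is a null-homotopic loop at $p$. Hence $\mathfrak h_p=\tau_\gamma\mathfrak h_q\tau_\gamma^{-1}$. By the transport analogue of \eqref{eq:group-action},
\[
(\tau_\gamma\Rm_q)_{x,y}=\tau_\gamma(\Rm_q)_{\tau_\gamma^{-1}x,\tau_\gamma^{-1}y}\tau_\gamma^{-1},
\]
and by Step~1 this lies in $\tau_\gamma\mathfrak h_q\tau_\gamma^{-1}=\mathfrak h_p$.

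The main obstacle is Step~1, specifically justifying that the infinitesimal limit lands in $\mathfrak h_q$ when $H^0_q$ might a priori not be closed. I would handle this by invoking the fact that restricted holonomy groups are arcwise-connected subgroups and hence Lie subgroups with the property that smooth curves in them are smooth in the subgroup topology (Yamabe/Kobayashi--Nomizu). Alternatively, I would bypass the issue by passing to the closure $\overline{H^0_q}$ and then using the fact that the curvature values $\Rm_{u,v}$ generate an algebra that is normalized accordingly. Citing \cite{KN} for the first route seems simplest.
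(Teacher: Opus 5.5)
Your proposal is correct and follows the paper's argument. You show that the mixed second derivative of parallel transport around small coordinate rectangles lies in the holonomy algebra at the base point, then conjugate by $\tau_\gamma$ to land in $\mathfrak h_p$. Your main justification is sound: Lie subgroups are weakly embedded, and the first derivatives vanish at $(0,0)$. The paper makes the same point by fixing $s$, noting $\partial_t\Psi(s,0)\in\mathfrak h$, and then differentiating in $s$. You should drop the ``pass to the closure'' alternative, since it would only place $\Rm_{u,v}$ in $\Lie(\overline{H^0_q})$, which may be strictly larger than $\mathfrak h_q$.
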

\begin{proof}
Recall the elementary infinitesimal part of the holonomy theorem
\cite{AmbroseSinger,KN}.  At $p$, choose commuting coordinate vector fields
$X,Y$ with $X_p=x$ and $Y_p=y$.  Let $\Psi(s,t)$ be parallel transport
around the coordinate rectangle with side parameters $s,t$, with orientation
chosen to agree with~\eqref{eq:geometric-sign}.  Then
\[
 \Psi(s,0)=\Psi(0,t)=\operatorname{id},\qquad
 \left.\frac{\partial^2\Psi}{\partial s\partial t}\right|_{(0,0)}=\Rm_{x,y}.
\]
The second identity follows by extending an initial vector first parallel
along the $X$-curves and then along the $Y$-curves, and differentiating the
parallel-transport equations; the resulting mixed derivative is
$D_YD_X-D_XD_Y$ at $p$.
For each fixed $s$, $\partial_t\Psi(s,0)\in\mathfrak h_p$.
Differentiating in $s$ proves $\Rm_{x,y}\in\mathfrak h_p$.

Parallel transport conjugates restricted holonomy groups:
$\tau_\gamma H_q^0\tau_\gamma^{-1}=H_p^0$.
Therefore it conjugates their Lie algebras and gives the asserted inclusion
at $q$.  Only this inclusion, not the reverse spanning assertion of the full
Ambrose--Singer theorem, is used here.
\end{proof}

\begin{proof}[Derivation of Theorem~\ref{thm:geometric} from Theorem~\ref{thm:usual}]
The restricted holonomy groups are connected Lie subgroups of the orthogonal
groups.  Under the irreducibility hypothesis, they are compact by
Corollary~\ref{cor:irreducible-closed}.
For $n=2$, a connected irreducible subgroup of $\SO(2)$ is $\SO(2)$ itself,
so the nontransitive case does not occur.  Assume $n\geq3$ and
nontransitivity.

Lemma~\ref{lem:curvature-inclusion} and Theorem~\ref{thm:usual} show that
$(T_pM,\Rm_p,H_p^0)$ is symmetric.  Consequently its Ricci tensor is invariant
under the irreducible group $H_p^0$, and hence
$\Ric=f\,g_M$.  The contracted second Bianchi identity makes $f=\beta$
constant on $M$.

Fix $p$.  For every path $\gamma:q\to p$, the transported tensor
$Q=\tau_\gamma(\Rm_q)$ defines an irreducible holonomy system on $T_pM$ with
structure group $H_p^0$.  It is symmetric by Theorem~\ref{thm:usual}.
The operators $\mathcal T$ and $\pi_{\mathfrak h_p}$ in
Theorem~\ref{thm:kostant} depend only on $\mathfrak h_p\subset\mathfrak{so}(T_pM)$,
not on $q$ or $\gamma$.  Moreover,
$\Ric_Q=\beta\,g_p$, so~\eqref{eq:kostant-ricci} gives
$\lambda_Q=-2\beta$.  This remains valid when $Q=0$, for which
$\lambda_Q=0$.  Formula~\eqref{eq:kostant} therefore makes
$\tau_\gamma(R_q)$ independent of $q$ and $\gamma$, and equal to $R_p$.
Thus $D\Rm=0$.

The flat lemma below provides a flat of dimension at least two.  For a
locally symmetric metric, the maximal dimension of tangent subspaces with
$\Rm_{W,W}=0$ is the usual geometric rank; equivalently, these subspaces
exponentiate locally to totally geodesic flat submanifolds. 
\end{proof}

\section{Flats, Jacobi operators, and root centralizers}\label{sec:flats}

Throughout this section $S=(V,\Rm,G)$ ($\dim(V)\ge 2$) is irreducible and $G^\Rm$ is
nontransitive.  Set $\cM=G(\Rm)$.
A subspace $E\subset V$ is called \emph{geodesic} if
\begin{equation}\label{eq:geodesic}
 Q_{x,y}z\in E,\qquad \forall x,y,z\in E,\ Q\in\cM.
\end{equation}
The word always means geodesic for \emph{every} tensor in $\cM$.
All the constructions in this section are made without an induction
hypothesis.

\subsection{The flat lemma}
The pair symmetry of the curvature operator gives the useful equivalence
\begin{equation}\label{eq:flat-criterion}
 W\text{ is flat}
 \quad\Longleftrightarrow\quad
 \g^\Rm(W)\subset W^\perp.
\end{equation}
In particular $G$ sends flats to flats.

\begin{proposition}[Flat lemma]\label{prop:flat}
Every vector of $V$ belongs to a flat of dimension at least two.
\end{proposition}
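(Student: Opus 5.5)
We need to show that every unit vector $x\in V$ lies in a flat of dimension at least two. The idea is to use non-transitivity of $G^\Rm$ to find a direction $y\perp x$ with $\g^\Rm(x)\perp y$, and then check that $\Span\{x,y\}$ is flat via \eqref{eq:flat-criterion}. This is the analytic argument suggested by Wallach.

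\textbf{Step 1: a critical point.} Fix a unit vector $x$ and consider the orbit $G^\Rm x$ in the unit sphere $S\subset V$. Since $G^\Rm$ is compact (Lemma~\ref{lem:closed}) and not transitive on $S$, the orbit is a proper closed submanifold. Its dimension is at most $n-2$: an orbit of dimension $n-1$ would be open and closed in the connected sphere, hence all of it. So $\g^\Rm x$, the tangent space of the orbit at $x$, together with $x$ itself, spans a proper subspace of $V$. Choose a unit vector $y$ with
\[
y\perp x,\qquad y\perp \g^\Rm x .
\]
If instead one wants a variational formulation, take $y$ realizing the maximal distance from the orbit, i.e.\ minimize $f(g)=\ip{gx}{y}$ over $G^\Rm$. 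The first-order condition then gives $\ip{Ax}{y}=0$ for all $A\in\g^\Rm$ at the extremal point; after replacing $x$ by $gx$ and translating back by $g^{-1}$, this yields the same condition.

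\textbf{Step 2: flatness of $\Span\{x,y\}$.} By \eqref{eq:flat-criterion}, it suffices to show $\g^\Rm(W)\subset W^\perp$ for $W=\Span\{x,y\}$. Skew-symmetry of every $A\in\g^\Rm$ gives $\ip{Ax}{x}=\ip{Ay}{y}=0$ automatically. It also gives $\ip{Ay}{x}=-\ip{Ax}{y}=0$, by the choice of $y$. Hence $AW\perp W$ for all $A\in\g^\Rm$, so $W$ is flat, and it contains $x$.

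\textbf{Main obstacle.} The delicate point is justifying Step 1, namely that $\dim(\g^\Rm x)\le n-2$. This requires the orbit to be an embedded closed submanifold, which uses compactness of $G^\Rm$ from Lemma~\ref{lem:closed} and connectedness of $S$ for $n\ge2$. Equivalently, one can argue as follows: if $\g^\Rm x=x^\perp$ at some point, then the orbit map is a submersion there, so the orbit is open in $S$. Since all points of an orbit are equivalent, the same holds at every point, and the orbit is then open and compact in $S$, hence all of $S$. This contradicts non-transitivity.
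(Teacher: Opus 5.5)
Your proof is correct. Its second half (pair symmetry via \eqref{eq:flat-criterion}) matches the paper's, but you obtain the normal direction by a different mechanism.

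The paper, following Wallach, fixes $u$, picks $v$ off the orbit $G^\Rm u$, and maximizes $h\mapsto\ip{u}{hv}$ over $G^\Rm$. The first-order condition gives $h_0v\perp\g^\Rm u$. Independence of $u$ and $h_0v$ must then be checked separately: $h_0v\neq u$ by the choice of $v$, and $h_0v\neq -u$ because $V^{G^\Rm}=0$. This is why the paper splits off the case $\Rm=0$ and invokes irreducibility and faithfulness.

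You instead bound $\dim\g^\Rm x\le n-2$ by the open--closed argument on the connected sphere, so a unit $y\perp x$ with $y\perp\g^\Rm x$ exists outright. You need no nondegeneracy check and no use of $V^{G^\Rm}=0$, and $\Rm=0$ is covered uniformly. The price is slightly more Lie-theoretic input: the orbit map is a submersion (or the orbit is an embedded submanifold), plus connectedness of $S^{n-1}$. The paper needs only that a continuous function on a compact group attains its maximum. Both arguments rely on compactness of $G^\Rm$ from Lemma~\ref{lem:closed}.

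One caveat concerns the variational aside in your Step~1. It omits exactly the nondegeneracy check: after translating back, $g_0^{-1}y$ could be $\pm x$, for instance $-x$ when $\Rm=0$ and $y=-x$. This is the point the paper settles with $V^{G^\Rm}=0$. Since your main argument does not use the aside, this does not affect correctness.
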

\begin{proof}
If $\Rm=0$, take $W=V$.  Otherwise $H:=G^\Rm$ is a nontrivial compact normal
subgroup of $G$.  The fixed space $V^H$ is $G$-invariant.  Irreducibility
and faithfulness imply $V^H=0$.

Fix a unit vector $u$.  By nontransitivity choose a unit vector $v$ outside
its $H$-orbit.  The function
\[
 f(h)=\ip{u}{hv},\qquad h\in H,
\]
attains its maximum at $h_0$.  Differentiating gives
$\ip{u}{Bh_0v}=0$ for every $B\in\g^\Rm$.  Hence, for every $Q\in\cM$,
\[
 0=\ip{u}{Q_{x,y}h_0v}=\ip{Q_{h_0v,u}x}{y}
 \qquad\forall x,y\in V,
\]
so $Q_{u,h_0v}=0$.
The vectors $u$ and $h_0v$ are independent: $h_0v\neq u$ by the choice of
$v$, and $h_0v=-u$ would make the maximum equal to $-1$, forcing every
$hv=-u$, contrary to $V^H=0$.  Thus
$\Span_{\mathbb{R}}\{u,h_0v\}$ is the required flat.  The zero vector lies in any such
flat.
\end{proof}

\subsection{A universal zero-pair identity}
If $u,v\in V$ satisfy $P_{u,v}=0$ for every $P\in\cM$, then
\eqref{eq:infinitesimal}, applied to $B(P)\in\cM$, gives
\begin{equation}\label{eq:zero-pair}
 P_{Bu,v}+P_{u,Bv}=0,
 \qquad \forall B\in\g,\ P\in\cM.
\end{equation}
This identity applies both to pairs in a flat and to the centralizer pairs
introduced below.

\subsection{Commuting Jacobi operators}
Fix a flat $W$.  For $u,v\in W$ and $Q\in\cM$, define
\[
 T_Q^{u,v}z:=Q_{u,z}v.
\]
The first Bianchi identity and $Q_{u,v}=0$ imply
\begin{equation}\label{eq:jacobi-symmetric}
 T_Q^{u,v}=T_Q^{v,u},\qquad (T_Q^{u,v})^{\tr}=T_Q^{u,v}.
\end{equation}
Here $A^{tr}$ denotes the traspose of $A$. 
There is also a mixed product identity:
\begin{equation}\label{eq:jacobi-product}
 T_Q^{u,v}T_P^{s,t}=T_Q^{s,t}T_P^{u,v}
 \qquad\forall u,v,s,t\in W,\ P,Q\in\cM.
\end{equation}
Indeed, by~\eqref{eq:zero-pair}, Bianchi, and
\eqref{eq:jacobi-symmetric},
\begin{align*}
 T_Q^{u,v}T_P^{s,t}z
 &=Q_{u,P_{s,z}t}v
 =-Q_{P_{s,z}u,t}v
 =-Q_{P_{u,z}s,t}v\\
 &=Q_{t,P_{u,z}s}v
 =Q_{v,P_{u,z}s}t
 =-Q_{P_{u,z}v,s}t
 =T_Q^{s,t}T_P^{u,v}z.
\end{align*}
Taking $P=Q$ proves that, for each fixed $Q$, the symmetric operators
$T_Q^{u,v}$ commute.

Choose a common orthonormal eigenbasis $X_1^Q,\ldots,X_n^Q$:
\begin{equation}\label{eq:jacobi-eigen}
 T_Q^{u,v}X_k^Q=\Lambda_k^Q(u,v)X_k^Q.
\end{equation}
Each $\Lambda_k^Q$ is a symmetric bilinear form on $W$.  The operators
annihilate $W$, so a vector with $\Lambda_k^Q\neq0$ is orthogonal to $W$.
The simultaneous diagonalization   is asserted only for a fixed $Q$.

\begin{lemma}[Rank-one lemma]\label{lem:rank-one}
If $\Lambda_k^Q\neq0$, it has rank one.  With
$U_{k,Q}:=\ker\Lambda_k^Q$, one has
\begin{equation}\label{eq:root-annihilation}
 P_{s,X_k^Q}=0, \quad \forall  s\in U_{k,Q},\ P\in\cM.
\end{equation}
\end{lemma}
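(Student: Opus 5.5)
The plan is to extract both assertions from the symmetric structure of the Jacobi operators on $W$, using \eqref{eq:jacobi-symmetric}, \eqref{eq:jacobi-product} and the zero-pair identity \eqref{eq:zero-pair}.

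\emph{Rank one.} Reread the chain of equalities proving \eqref{eq:jacobi-product}. Besides exchanging the pairs $(u,v)\leftrightarrow(s,t)$, the middle step $-Q_{P_{s,z}u,t}v=-Q_{P_{u,z}s,t}v$ exchanges $s$ and $u$. Together with \eqref{eq:jacobi-symmetric}, this should show that $T_Q^{u,v}T_P^{s,t}$ depends only on the multiset $\{u,v,s,t\}$, up to moving the labels $P,Q$ accordingly. In particular $T_Q^{u,v}T_Q^{s,t}=T_Q^{u,s}T_Q^{v,t}$. Applying this to the common eigenvector $X_k^Q$ gives
\[
 \Lambda_k^Q(u,v)\Lambda_k^Q(s,t)=\Lambda_k^Q(u,s)\Lambda_k^Q(v,t)\qquad\forall u,v,s,t\in W.
\]
This says all $2\times2$ minors of the symmetric matrix of $\Lambda_k^Q$ vanish, so $\Lambda_k^Q=\pm\ell\otimes\ell$ for a nonzero linear form $\ell=\ell_{k,Q}$. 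Hence $U_{k,Q}=\ker\ell$ is a hyperplane of $W$.

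\emph{Annihilation.} Fix $s\in U_{k,Q}$, write $X=X_k^Q$, and choose $w\in W$ with $\Lambda(w,w)\neq0$. Use the mixed identity \eqref{eq:jacobi-product} with an arbitrary $P\in\cM$, applied to $X$ and paired with $X$ or with other vectors. For $v\in W$,
\[
 \Lambda(w,w)\ip{T_P^{s,v}X}{X}=\ip{T_P^{w,w}X}{T_Q^{s,v}X}=\Lambda(s,v)\ip{T_P^{w,w}X}{X}=0 .
\]
More generally, pairing $T_P^{s,v}X$ against an arbitrary $z$ and writing $\Lambda(w,w)T_P^{s,v}X=T_Q^{w,w}T_P^{s,v}X+\dots$ via the permutation symmetry should show $T_P^{s,v}X=0$, i.e.\ $P_{s,z}v\perp X$ and $P_{s,X}v=0$ for all $v\in W$. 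Next, apply \eqref{eq:zero-pair} to the zero pairs $(s,v)$ in $W$ with $B$ ranging over $\g^\Rm$, in particular $B=Q'_{a,b}$. This propagates the vanishing from $W$-slots to general slots. The target is $P(s,X,a,b)=0$ for all $a,b$, which is exactly $P_{s,X}=0$ by pair symmetry.

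The first part is routine. The main obstacle is this last propagation, from the partial vanishing $P(s,X,\cdot,\cdot)|_{W\times V}=0$ to full vanishing of $P_{s,X}$. If the direct algebra stalls, I would argue as in the flat lemma. Show that $P(s,X,s,X)=0$ for all $P\in\cM$, and that $\Span\{s,X\}$ together with $U_{k,Q}$ is flat. The latter amounts to checking $\g^\Rm(U_{k,Q}\oplus\R X)\perp(U_{k,Q}\oplus\R X)$ via \eqref{eq:flat-criterion}, using $Q$-eigenvector relations differentiated through \eqref{eq:infinitesimal} and Lemma~\ref{lem:sectional} applied on that subspace.
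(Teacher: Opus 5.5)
Your rank-one argument is sound, and it differs from the paper's. The chain proving \eqref{eq:jacobi-product} does give $T_Q^{u,v}T_P^{s,t}=T_Q^{s,v}T_P^{u,t}$ (from $P_{s,z}u=P_{u,z}s$), so the product is fully symmetric in the four vectors. On $X=X_k^Q$, with $\Lambda=\Lambda_k^Q$, this yields $\Lambda(a,a)\Lambda(s,t)=\Lambda(a,s)\Lambda(a,t)$.

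Your partial vanishing $P_{s,X}v=0$ for $v\in W$ can also be obtained. The direct route puts $T_Q^{w,w}$ on the right: $\Lambda(w,w)T_P^{s,v}X=T_P^{s,v}T_Q^{w,w}X=T_P^{w,w}T_Q^{s,v}X=0$. Your sketch applying $T_Q^{w,w}$ on the left is not justified as written, since $T_P^{s,v}X$ need not be an eigenvector of $T_Q^{w,w}$.

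The genuine gap is the substance of \eqref{eq:root-annihilation}. $P_{s,X}$ must vanish as an element of $\mathfrak{so}(V)$, i.e.\ $P(s,X,a,b)=0$ for all $a,b\in V$, not only when a slot lies in $W$. You leave this ``propagation'' open, and your fallback is circular. Since $W$ is flat, flatness of $U_{k,Q}\oplus\R X$ is literally equivalent to \eqref{eq:root-annihilation}. Moreover, Lemma~\ref{lem:sectional} applied on that subspace only controls $P$ with all four arguments in it, which is much weaker than flatness in the sense used here.

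The missing idea is one specific choice in \eqref{eq:zero-pair}: take $B=Q_{w,X}\in\g$ and the zero pair $(s,w)$ in $W$. Since $Bw=T_Q^{w,w}X=\Lambda(w,w)X$ and $Bs=T_Q^{w,s}X=\Lambda(w,s)X=0$, you get
\[
 \Lambda(w,w)P_{s,X}=P_{s,Bw}=-P_{Bs,w}=0 .
\]
This is the paper's proof, and it makes your first part unnecessary. Run the argument for $s$ in the hyperplane $\{s\in W:\Lambda(w,s)=0\}$ and take $P=Q$. This gives $\Lambda(s,t)X=Q_{s,X}t=0$ for all $t\in W$. So that hyperplane is contained in $\ker\Lambda$; since $\Lambda\neq0$, it equals $\ker\Lambda$, and rank one follows.
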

\begin{proof}
Suppress the superscript on $X_k$.  Choose $a\in W$ with
$\Lambda_k^Q(a,a)\neq0$ and initially set
$U=\{s\in W:\Lambda_k^Q(a,s)=0\}$.  For $s\in U$, apply
\eqref{eq:zero-pair} to $B=Q_{a,X_k}$ and the pair $(s,a)$:
\begin{align*}
 \Lambda_k^Q(a,a)P_{s,X_k}
 &=P_{s,Q_{a,X_k}a}
 =-P_{Q_{a,X_k}s,a}\\
 &=-\Lambda_k^Q(a,s)P_{X_k,a}=0.
\end{align*}
This proves~\eqref{eq:root-annihilation} on $U$.  In particular,
$\Lambda_k^Q(s,t)X_k=Q_{s,X_k}t=0$ for $s\in U$, $t\in W$.
Thus the hyperplane $U$ is the kernel of $\Lambda_k^Q$, which has rank one.
\end{proof}

For each nonzero form choose a unit vector $e_{k,Q}\in U_{k,Q}^\perp\cap W$.
There is a nonzero real number $\lambda_{k,Q}$ such that
\begin{equation}\label{eq:rank-one-form}
 \Lambda_k^Q=\lambda_{k,Q}\ell_{k,Q}\otimes\ell_{k,Q},
 \qquad \ell_{k,Q}(u)=\ip{u}{e_{k,Q}}.
\end{equation}
No sign condition is imposed on $\lambda_{k,Q}$.

\subsection{Root centralizers}
For $U\subset W$ define
\begin{equation}\label{eq:centralizer}
 C(U):=\{z\in V:P_{u,z}=0\text{ for every }u\in U,\ P\in\cM\}.
\end{equation}
We pointed above that (\ref{eq:zero-pair}) holds for $u\in U$ and $v\in C(U)$. 
Linearity gives
\begin{equation}\label{eq:centralizer-intersection}
 C(U_1+U_2)=C(U_1)\cap C(U_2).
\end{equation}
If $W$ is maximal, then
\begin{equation}\label{eq:self-centralizer}
 C(W)=W:
\end{equation}
indeed $z\in C(W)$ makes $W+\R z$ flat.

For a maximal flat $W$ and a nonzero $\Lambda_k^Q$, put
\[
 E_{k,Q}:=C(U_{k,Q}).
\]
By Lemma~\ref{lem:rank-one},
\begin{equation}\label{eq:root-containments}
 U_{k,Q}\subsetneq W\subsetneq E_{k,Q},\qquad X_k^Q\in E_{k,Q}.
\end{equation}

\begin{proposition}[Geodesy and properness]\label{prop:centralizers}
Every $C(U)$ is geodesic.  If $\Rm\neq0$, $\dim W\geq2$, and
$\Lambda_k^Q\neq0$, then $E_{k,Q}\subsetneq V$.  Moreover the restriction
of $\cM$ to $E_{k,Q}$ is nonzero.
\end{proposition}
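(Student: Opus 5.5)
The three assertions are checked separately, each directly from identities already established. No induction is needed.

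\emph{Geodesy of $C(U)$.} Let $x,y,z\in C(U)$, $Q\in\cM$, $u\in U$ and $P\in\cM$; we must show $P_{u,Q_{x,y}z}=0$. Since $z\in C(U)$, the pair $(u,z)$ satisfies the hypothesis of \eqref{eq:zero-pair}. Apply \eqref{eq:zero-pair} with $B=Q_{x,y}\in\g$ to get
\[
P_{u,Q_{x,y}z}=-P_{Q_{x,y}u,z}.
\]
By the first Bianchi identity, $Q_{x,y}u=-Q_{y,u}x-Q_{u,x}y$. Both terms vanish, because $x,y\in C(U)$ and $u\in U$ give $Q_{u,x}=0$ and $Q_{u,y}=0$ (using skew-symmetry for the second). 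Hence $Q_{x,y}u=0$, and so $P_{u,Q_{x,y}z}=0$, i.e.\ $Q_{x,y}z\in C(U)$.

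\emph{Properness of $E_{k,Q}$.} Suppose $E_{k,Q}=V$, so that $P_{s,z}=0$ for all $s\in U_{k,Q}$, $z\in V$ and $P\in\cM$. Consider the nullity space
\[
N:=\{s\in V:\ P_{s,z}=0\ \ \forall z\in V,\ P\in\cM\}.
\]
By Lemma~\ref{lem:rank-one}, $U_{k,Q}$ is a hyperplane in $W$, and $\dim W\ge 2$, so $U_{k,Q}\neq0$. Since $U_{k,Q}\subset N$, we get $N\neq0$.

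Next, $N$ is $G$-invariant. Take $s\in N$ and $g\in G$. Because $\cM$ is $G$-invariant, $g^{-1}(P)\in\cM$. By \eqref{eq:group-action},
\[
P_{gs,z}=g\,\bigl(g^{-1}(P)\bigr)_{s,g^{-1}z}\,g^{-1}=0 ,
\]
so $gs\in N$. Irreducibility then forces $N=V$, which gives $\Rm=0$ and contradicts the hypothesis $\Rm\neq0$. Thus $E_{k,Q}\subsetneq V$.

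\emph{Nonvanishing of the restriction.} Choose $a\in W$ with $\Lambda_k^Q(a,a)\neq0$. By \eqref{eq:jacobi-eigen},
\[
Q_{a,X_k^Q}a=T_Q^{a,a}X_k^Q=\Lambda_k^Q(a,a)X_k^Q\neq0 .
\]
Both $a$ and $X_k^Q$ lie in $E_{k,Q}$, by \eqref{eq:root-containments}. So $Q$ restricted to $E_{k,Q}$ is nonzero. This restriction takes values in $E_{k,Q}$ by the geodesy just proved, so it is a genuine tensor on $E_{k,Q}$.

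All three steps are routine. The only point needing care is the $G$-invariance of the nullity space in the properness step, which relies on the $G$-invariance of $\cM$.
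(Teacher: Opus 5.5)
Your proof is correct and follows essentially the same route as the paper. Geodesy comes from Bianchi plus the zero-pair identity with $B=Q_{x,y}$, and properness from the $G$-invariant common nullity space containing the nonzero hyperplane $U_{k,Q}$; you verify that invariance explicitly, where the paper only asserts it. Nonvanishing comes from $Q_{a,X_k^Q}a=\Lambda_k^Q(a,a)X_k^Q\neq0$, where the paper takes the particular choice $a=e_{k,Q}$.
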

\begin{proof}
Let $E=C(U)$ and let $a,b,z\in E$.  For $u\in U$, Bianchi gives
$P_{a,b}u=0$, since $P_{u,a}=P_{u,b}=0$.
Taking $B=P_{a,b}$ in~\eqref{eq:zero-pair}, for an arbitrary $Q\in\cM$,
we obtain
\[
 Q_{u,P_{a,b}z}=-Q_{P_{a,b}u,z}=0.
\]
Thus $P_{a,b}z\in C(U)$.

The common nullity space
\[
 N_0:=\{u\in V:P_{u,v}=0\text{ for all }v\in V,\ P\in\cM\}
\]
is $G$-invariant.  If $E_{k,Q}=V$, then $U_{k,Q}\subset N_0$.
Since $\dim W\geq2$, the hyperplane $U_{k,Q}$ is nonzero.
Irreducibility gives $N_0=V$, contradicting
$\Rm\neq0$.  This proves properness.
Finally,
\[
 Q_{e_{k,Q},X_k^Q}e_{k,Q}=\lambda_{k,Q}X_k^Q\neq0,
\]
and all three arguments belong to $E_{k,Q}$.
\end{proof}

\begin{lemma}[Duality]\label{lem:duality}
For a root centralizer $E_{k,Q}$,
\[
 U_{k,Q}=\{u\in W:P_{u,z}=0\text{ for all }z\in E_{k,Q},\ P\in\cM\}.
\]
If $E_{k,Q}\neq E_{l,P}$, then $E_{k,Q}\cap E_{l,P}=W$.
\end{lemma}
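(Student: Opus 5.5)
The first assertion will follow directly from the definitions together with one eigenvector computation, and the second from the intersection formula \eqref{eq:centralizer-intersection} combined with the maximality identity \eqref{eq:self-centralizer}. Throughout, $W$ is the maximal flat and $E_{k,Q}=C(U_{k,Q})$ is the root centralizer attached to a nonzero form $\Lambda_k^Q$.

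For the duality, let $A$ denote the set on the right-hand side. The inclusion $U_{k,Q}\subset A$ is the definition \eqref{eq:centralizer} of $C(U_{k,Q})=E_{k,Q}$ read in the other direction: every $z\in E_{k,Q}$ satisfies $P_{u,z}=0$ for all $u\in U_{k,Q}$ and $P\in\cM$.

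For the reverse inclusion, take $u\in A$ and test the condition on the single vector $z=X_k^Q$, which lies in $E_{k,Q}$ by \eqref{eq:root-containments}. Then $Q_{u,X_k^Q}=0$, and in particular $Q_{u,X_k^Q}e_{k,Q}=0$. By the symmetry \eqref{eq:jacobi-symmetric}, the eigenvalue equation \eqref{eq:jacobi-eigen} and the rank-one form \eqref{eq:rank-one-form}, the left side equals
\[
T_Q^{u,e_{k,Q}}X_k^Q=\Lambda_k^Q(u,e_{k,Q})X_k^Q=\lambda_{k,Q}\,\ell_{k,Q}(u)\,X_k^Q .
\]
Since $\lambda_{k,Q}\neq0$ and $X_k^Q\neq0$, this forces $\ell_{k,Q}(u)=0$. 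Hence $u\in\ker\ell_{k,Q}=U_{k,Q}$.

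For the intersection statement, \eqref{eq:centralizer-intersection} gives
\[
E_{k,Q}\cap E_{l,P}=C(U_{k,Q}+U_{l,P}).
\]
Both $U_{k,Q}$ and $U_{l,P}$ are hyperplanes of $W$. The hypothesis $E_{k,Q}\neq E_{l,P}$ forces $U_{k,Q}\neq U_{l,P}$, since equal hyperplanes give equal centralizers by definition (alternatively, the duality just proved recovers $U$ from $E$). Two distinct hyperplanes of $W$ span $W$, so $U_{k,Q}+U_{l,P}=W$. Then \eqref{eq:self-centralizer} gives $C(W)=W$ for the maximal flat, which completes the proof.

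The only step that is not immediate is the reverse inclusion in the duality. Its key point is choosing the test vector $z=X_k^Q$ and pairing it with $e_{k,Q}$, so that the rank-one structure from Lemma~\ref{lem:rank-one} isolates $\ell_{k,Q}(u)$.
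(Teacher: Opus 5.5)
Your proposal is correct and matches the paper's proof: the forward inclusion is the definition of $C(U_{k,Q})$, and the reverse inclusion tests $z=X_k^Q$ against $e_{k,Q}$, so that $Q_{u,X_k^Q}e_{k,Q}=\lambda_{k,Q}\ell_{k,Q}(u)X_k^Q$ isolates $\ell_{k,Q}(u)$; the intersection claim then combines \eqref{eq:centralizer-intersection}, the fact that two distinct hyperplanes span $W$, and $C(W)=W$. Your appeal to \eqref{eq:jacobi-symmetric} is unnecessary, since $Q_{u,X_k^Q}e_{k,Q}=T_Q^{u,e_{k,Q}}X_k^Q$ holds by the definition of $T_Q^{u,v}$.
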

\begin{proof}
The left hand side is a subset of the right hand side  by the definition of $C(U_{k,Q})$.
If $u\notin U_{k,Q}$, then
\[
 Q_{u,X_k^Q}e_{k,Q}
 =\lambda_{k,Q}\ell_{k,Q}(u)X_k^Q\neq0,
\]
so the reverse inclusion follows.  Distinct centralizers have distinct
hyperplanes.  The sum of two distinct hyperplanes in $W$ is $W$, and
\eqref{eq:centralizer-intersection}--\eqref{eq:self-centralizer} give the
intersection assertion.
\end{proof}

\section{Restriction kernels and a trace-vanishing lemma}\label{sec:trace}

\subsection{The restriction-kernel theorem}
Let $E\subsetneq V$ be geodesic for $\cM=G(\Rm)$ and define
\begin{align}
 r_E:\cM&\longrightarrow\cR(E),& r_E(Q)&=Q|_{E^4},\label{eq:restriction}\\
 K_E&:=\ker r_E,&
 J_E&:=\{B\in\g:B(\cM)\subset K_E\}.\label{eq:kernel-ideal}
\end{align}
Because $E$ is geodesic, $r_E(Q)=0$ is equivalent to
$Q_{x,y}z=0$ for all $x,y,z\in E$.

\begin{theorem}[Restriction and cross curvature]\label{thm:restriction}
The subspace $K_E$ is $G$-invariant, and $J_E$ is an ideal of $\g$.
If $B(E)\subset E^\perp$, then $B\in J_E$.  In particular,
\begin{equation}\label{eq:cross-in-J}
 P_{x,y}\in J_E\qquad \forall x\in E,\ y\in E^\perp,\ P\in\cM.
\end{equation}
Consequently,
\begin{equation}\label{eq:cross-kernel}
 G(B(\Rm))\subset K_E\qquad \forall B\in J_E.
\end{equation}
\end{theorem}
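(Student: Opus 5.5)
The plan is to verify each assertion directly from the infinitesimal action \eqref{eq:four-action}, using only that $E$ is geodesic in the sense of \eqref{eq:geodesic}. One observation drives every step. For $n\in E^\perp$, $y,z,t\in E$ and $Q\in\cM$, pair symmetry \eqref{eq:cur12-34} and skewness of $Q_{z,t}$ give
\[
 Q(n,y,z,t)=Q(z,t,n,y)=\ip{Q_{z,t}n}{y}=-\ip{n}{Q_{z,t}y}=0,
\]
because $Q_{z,t}y\in E$. The same vanishing holds when the $E^\perp$-vector sits in any of the four slots, by the curvature symmetries. So when we evaluate a curvature tensor on $E$ with exactly one argument replaced by a normal vector, the result is zero.

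\emph{$G$-invariance of $K_E$.} The group $E$ is not $G$-invariant, so we cannot argue with $g(Q)$ directly. Instead, since $G$ is connected, it suffices to show $B(K_E)\subset K_E$ for all $B\in\g$; exponentiating then gives invariance under $G$. Take $Q\in K_E$ and $x,y,z,t\in E$, and expand $B(Q)(x,y,z,t)$ by \eqref{eq:four-action}. In each of the four terms, split the entry $Bx$ (respectively $By,Bz,Bt$) into its $E$ and $E^\perp$ components.
\begin{itemize}
\item The $E$-component contributes zero, because $Q|_{E^4}=0$.
\item The $E^\perp$-component contributes zero, by the observation above.
\end{itemize}
Hence $B(Q)\in K_E$. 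Note that $B(Q)\in\cM$ because $\cM$ is $\g$-invariant.

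\emph{$J_E$ is an ideal.} $J_E$ is clearly a linear subspace. Let $A\in\g$, $B\in J_E$ and $Q\in\cM$. Then
\[
 [A,B](Q)=A(B(Q))-B(A(Q)).
\]
The first term lies in $K_E$ because $B(Q)\in K_E$ and $K_E$ is $\g$-invariant by the previous step. The second lies in $K_E$ because $A(Q)\in\cM$ and $B\in J_E$.

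\emph{The criterion $B(E)\subset E^\perp$.} Suppose $B(E)\subset E^\perp$, and take any $Q\in\cM$ and $x,y,z,t\in E$. Each term of \eqref{eq:four-action} then has exactly one argument in $E^\perp$ and the other three in $E$, so every term vanishes by the observation. Thus $B(\cM)\subset K_E$, that is, $B\in J_E$.

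\emph{The cross curvatures \eqref{eq:cross-in-J}.} Let $P\in\cM$, $x\in E$, $y\in E^\perp$. The operator $P_{x,y}$ lies in $\g$ by the holonomy-system hypothesis together with $G$-invariance of $\g$. For $z,w\in E$,
\[
 \ip{P_{x,y}z}{w}=P(x,y,z,w)=P(z,w,x,y)=\ip{P_{z,w}x}{y}=0,
\]
since $P_{z,w}x\in E$ by geodesy. Hence $P_{x,y}(E)\subset E^\perp$, and the criterion gives $P_{x,y}\in J_E$.

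\emph{The consequence \eqref{eq:cross-kernel}.} For $B\in J_E$ we have $B(\Rm)\in K_E$, since $\Rm\in\cM$. Because $K_E$ is a $G$-invariant linear subspace, the whole span $G(B(\Rm))$ lies in $K_E$.

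\emph{Main obstacle.} The only nontrivial point is the $G$-invariance of $K_E$ despite $E$ not being $G$-stable. It is resolved by passing to the Lie algebra and using geodesy through the observation above. Everything else is routine bookkeeping with \eqref{eq:four-action}.
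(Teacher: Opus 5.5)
Your proposal is correct and follows essentially the same route as the paper. Each term of \eqref{eq:four-action} is killed by geodesy, pair symmetry and skewness; invariance of $K_E$ is proved at the Lie algebra level and then exponentiated; and the cross-curvature and consequence steps are identical. The only differences are cosmetic: the paper kills the first-step terms directly via $Q_{z,t}y=0$ for $Q\in K_E$ rather than splitting $Bx$ into tangential and normal parts, and it obtains the ideal property by noting that $J_E$ is the kernel of the representation on $\cM/K_E$, which your bracket computation simply unwinds.
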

\begin{proof}
Let $Q\in K_E$ and $B\in\g$.  Evaluate~\eqref{eq:four-action} on
$x,y,z,t\in E$.  Each of its four terms is zero.  For example,
\[
 Q(Bx,y,z,t)=\ip{Q_{z,t}Bx}{y}
           =-\ip{Bx}{Q_{z,t}y}=0,
\]
and the other terms can be treated by pair symmetry and skew-adjointness.
Hence $r_E(B(Q))=0$, so $B(K_E)\subset K_E$.  Connectedness of $G$
gives $G$-invariance.
The induced representation on $\cM/K_E$ has kernel $J_E$ at the Lie
algebra level; therefore $J_E$ is an ideal.

If $B(E)\subset E^\perp$, every term on the right side of
\eqref{eq:four-action}, for an arbitrary $Q\in\cM$ and four arguments
in $E$, has one normal and three tangential arguments.  Geodesy, pair
symmetry and skew-adjointness make each term zero.  Thus $B\in J_E$.
For $B=P_{x,y}$ with $x\in E$, $y\in E^\perp$, pair symmetry gives
\[
 \ip{Bz}{t}=\ip{P_{z,t}x}{y}=0\qquad \forall z,t\in E,
\]
so $B(E)\subset E^\perp$.  Finally, $B(\Rm)\in K_E$ and $K_E$ is
$G$-invariant, proving~\eqref{eq:cross-kernel}.
\end{proof}

\begin{remark}\label{rem:quotient}
Although $E$ need not be $G$-invariant, its restriction kernel $K_E$ is.
Thus $\cM/K_E$ is a $G$-module, identified as a vector space with the
curvature restrictions visible on $E$.  The ideal $J_E$ is exactly the annihilator of the
quotient module.
\end{remark}

\subsection{Partial invariance is sufficient for trace vanishing}
The following lemma is the new ingredient in the orbit-span
induction.  Its proof resembles the Killing-form calculation in
Theorem~\ref{thm:kostant}, but with the error term that obstructs a Lie
algebra structure on $\g\oplus V$.

\begin{lemma}[Trace vanishing under an invariant ideal]\label{lem:trace}
Let $\g\subset\mathfrak{so}(V)$ be a Lie subalgebra, and let $Q$ be an
algebraic curvature tensor with $Q_{x,y}\in\g$.  Suppose
$\n\triangleleft\g$, is an ideal, and $B(Q)=0$ for every $B\in\n$.
If
\[
 A:=Q_{x,y}\in\n,\qquad Ax=Q_{x,y}x=0,
\]
then $A=0$.  Neither irreducibility nor invariance of $Q$ under all
of $\g$ is required.
\end{lemma}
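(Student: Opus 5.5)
The plan is to compute $\norm{A}^2=\ip{A}{Q_{x,y}}_{\mathfrak{so}(V)}$ in two ways and compare. This is a Killing-form type computation, as in the proof of Theorem~\ref{thm:kostant}, but it uses only the infinitesimal invariance $A(Q)=0$ for the single element $A\in\n$ together with $Ax=0$. The hypothesis that $\n$ is an ideal is needed only to guarantee $A(Q)=0$, since $A\in\n$. So the whole argument uses
\[
 [A,Q_{u,v}]=Q_{Au,v}+Q_{u,Av}\quad\forall u,v,\qquad Ax=0,
\]
where the first identity is \eqref{eq:infinitesimal} with $B=A$.

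\textbf{First expression.} Regard $Q$ as a symmetric operator on $\Lambda^2V$. By \eqref{eq:wedge-pairing},
\[
 \norm{A}^2=-\ip{A}{Q(x\wedge y)}=-\ip{Q(A)}{x\wedge y}=\ip{Q(A)x}{y}.
\]
Writing $A=\tfrac12\sum_j e_j\wedge Ae_j$ up to sign, with $(e_j)$ an orthonormal basis, gives
\[
 \ip{Q(A)x}{y}=c_0\sum_j\ip{Q_{e_j,Ae_j}x}{y}
\]
for an explicit constant $c_0$. I then apply the first Bianchi identity to $Q_{e_j,Ae_j}x$. Since $A$ is skew, the change of basis $e_j\mapsto Ae_j$ makes the two resulting sums equal. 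This reduces $\norm{A}^2$ to a constant multiple of
\[
 S:=\sum_j\ip{Q_{x,e_j}Ae_j}{y}=\sum_j\ip{Q_{Ae_j,y}x}{e_j}.
\]

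\textbf{Second evaluation of $S$.} Insert the invariance identity in the form $Q_{Ae_j,y}=[A,Q_{e_j,y}]-Q_{e_j,Ay}$. This splits $S$ into three pieces.
\begin{itemize}
\item The term $-\sum_j\ip{Q_{e_j,y}Ax}{e_j}$ vanishes because $Ax=0$.
\item The Ricci term $-\sum_j\ip{Q_{e_j,Ay}x}{e_j}$ is, up to sign, $\Ric_Q(Ay,x)$. Since $A(Q)=0$ implies $A(\Ric_Q)=0$, we get $\Ric_Q(x,Ay)=-\Ric_Q(Ax,y)=0$. This is the second place where $Ax=0$ is used.
\item The remaining term is $\tr_V(A\circ L)$ with $Lz:=Q_{z,y}x$. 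Because $A$ is skew, only the skew part of $L$ contributes. By Bianchi that skew part is $w\mapsto$ (a multiple of) $Q_{x,y}w$, up to sign. Hence $\tr(AL)=c_1\tr(AQ_{x,y})=c_1'\norm{A}^2$.
\end{itemize}

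Combining the two computations yields a relation $\norm{A}^2=c\,\norm{A}^2$ with a universal constant $c$. The conclusion $A=0$ follows once $c\neq1$.

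The main obstacle is the bookkeeping of signs and factors of $\tfrac12$. These come from the pairing convention $\ip{A}{B}=-\tfrac12\tr(AB)$, from $Q(x\wedge y)=-Q_{x,y}$, and from the two applications of Bianchi, and they decide whether $c\neq1$. I will first test the constants on a model case, such as the curvature of the round sphere restricted to the trivial situation, or directly on $\Lambda^2$ of a low-dimensional space, before trusting the general computation. If the constant turns out to be $1$, the fallback is to pair $Q_{x,y}$ instead with $Q(A)$ evaluated symmetrically in $(x,y)$. That uses $\ip{Q(A)y}{x}$ as well, and it needs $Ay$ only through the Ricci term, which was already shown to vanish.
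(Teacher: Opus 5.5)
\textbf{There is a genuine gap.} The constant you hope is $\neq1$ is exactly $1$, so your two computations give the same identity.

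\textbf{Why $c=1$.} Use $\ip{A}{B}=-\tfrac12\tr(AB)$ and $A=-\tfrac12\sum_j e_j\wedge Ae_j$. Your first computation then uses only pair symmetry and Bianchi, with no hypothesis beyond $A=Q_{x,y}$, and gives $\norm{A}^2=\tfrac12\sum_j\ip{Q_{e_j,Ae_j}x}{y}=-S$. In your second computation, Bianchi shows that the skew part of $Lz=Q_{z,y}x$ is exactly $\tfrac12 Q_{x,y}=\tfrac12 A$. Hence $\tr(AL)=\tfrac12\tr(A^2)=-\norm{A}^2$. Once the $Ax=0$ term and the Ricci term drop out, you get $S=-\norm{A}^2$ again. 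Your fallback does not help, since $\ip{Q(A)y}{x}=-\ip{Q(A)x}{y}$ is the same number.

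\textbf{Why no variant can work.} This is not a bookkeeping accident. Your argument uses only $A=Q_{x,y}$, $A(Q)=0$ and $Ax=0$, and these do not imply $A=0$. Take $V=\R^4$, write $e_{ij}=e_i\wedge e_j$, and let $Q$ be the symmetric operator on $\Lambda^2V$ with
\[
 Q(e_{12})=2e_{34},\quad Q(e_{34})=2e_{12},\quad Q(e_{13})=e_{24},\quad Q(e_{24})=e_{13},\quad Q(e_{14})=-e_{23},\quad Q(e_{23})=-e_{14}.
\]
\begin{itemize}
\item In dimension four the Bianchi identity for $Q\in S^2(\Lambda^2V)$ reduces to one condition, and here it holds: $Q(e_1,e_2,e_3,e_4)+Q(e_2,e_3,e_1,e_4)+Q(e_3,e_1,e_2,e_4)=2-1-1=0$.
\item $Q$ is invariant under rotations of the $e_1e_2$-plane. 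It preserves $\Span\{e_{12},e_{34}\}$, which these rotations fix. On the span of the $p\wedge e_3$, $q\wedge e_4$ with $p,q\in\Span\{e_1,e_2\}$, its only nonzero pairings are $\ip{Q(p\wedge e_3)}{q\wedge e_4}=\omega(p,q)$, where $\omega$ is the rotation-invariant area form of that plane.
\item With $x=e_3$ and $y=e_4$ one gets $A=Q_{e_3,e_4}=-2e_{12}\neq 0$, while $A(Q)=0$ and $Ae_3=0$.
\end{itemize}

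\textbf{Where the plan breaks.} The lemma is consistent with this example only because $\Span\{Q_{u,v}\}=\mathfrak{so}(4)$. The only ideal of $\mathfrak{so}(4)$ containing $e_{12}$ is $\mathfrak{so}(4)$ itself, and it does not annihilate $Q$. So your remark that the ideal hypothesis is needed ``only to guarantee $A(Q)=0$'' is exactly where the plan fails. In fact $A(Q)=0$ already follows from $A\in\n$. The ideal property carries genuinely more information: $[A,C](Q)=0$ for every $C\in\g$.

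\textbf{The missing idea.} You must use this extra information, and the paper does so with a trace over $\cL=\g\oplus V$ rather than over $V$ alone. There $\tr_{\cL}(L_A^2)=\tr_\g(\ad_A^2)+\tr_V(A^2)$ is negative unless $A=0$. The defect $F_{x,y}=[L_x,L_y]-L_{Q_{x,y}}$, once composed with $L_A$, becomes $C\mapsto [A,C](Q)_{x,y}$. This vanishes precisely because $\n$ is an ideal. Meanwhile $[L_A,L_x]=L_{Ax}=0$ kills the remaining trace $\tr([L_x,L_y]L_A)=\tr([L_A,L_x]L_y)$. A computation confined to $V$ never sees the $\g$-block, which is where this cancellation takes place.
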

\begin{proof}
On the vector space $\cL=\g\oplus V$ define endomorphisms
\begin{align}
 L_B: \cL \to \cL \text{ with } (C+z)&:=[B,C]+Bz&& \forall B,C\in\g,\ z\in V,\label{eq:LB}\\
 L_x: \cL\to \cL \text{ with } L_x(C+z)&:=Q_{x,z}-Cx&& \forall x,z\in V,\ C\in\g.\label{eq:Lx}
\end{align}
We do \emph{not} assume a Lie bracket on $\cL$.
Direct calculation using Bianchi gives
\begin{equation}\label{eq:commutator-error}
 [L_x,L_y]=L_{Q_{x,y}}+F_{x,y},\qquad
 F_{x,y}(C+z)=C(Q)_{x,y}.
\end{equation}
The result follows from the  calculation  on the two summands:
\begin{align*}
 [L_x,L_y]C
 &=-Q_{x,Cy}-Q_{Cx,y}
   =[Q_{x,y},C]+C(Q)_{x,y},\\
 [L_x,L_y]z
 &=-Q_{y,z}x+Q_{x,z}y=Q_{x,y}z.
\end{align*}

Since $A\in\n$ and $\n$ is an ideal,
$[A,C]\in\n$, and hence
\begin{equation}\label{eq:error-killed}
 F_{x,y}L_A(C+z)=[A,C](Q)_{x,y}=0.
\end{equation}
Since  $[L_A, L_x](C) =-C(Ax)$.  Using  $A(Q)=0$, namely 
$[A,Q_{x,z}]-Q_{x,Az}=Q_{Ax,z}$, we have that $[L_A, L_x](z)=Q_{Ax, z}$. Thus 
\begin{equation}\label{eq:LA-Lx}
 [L_A,L_x]=L_{Ax}=0.
\end{equation}

Using $A=Q_{x,y}$, equations~\eqref{eq:commutator-error}--\eqref{eq:LA-Lx}
and cyclicity of trace yield
\begin{align}
 \tr_{\cL}(L_A^2)
 &=\tr_{\cL}\bigl(([L_x,L_y]-F_{x,y})L_A\bigr)\notag\\
 &=\tr_{\cL}([L_x,L_y]L_A)
  =\tr_{\cL}([L_A,L_x]L_y)=0.\label{eq:trace-zero}
\end{align}
On the other hand, $L_A$ is block diagonal, so
\begin{equation}\label{eq:negative-trace}
 \tr_{\cL}(L_A^2)
 =\tr_{\g}(\ad_A^2)+\tr_V(A^2)
 =-\norm{\ad_A}_{\mathrm{HS}}^2-\norm{A}_{\mathrm{HS}}^2.
\end{equation}
Here $\|\cdot\|_{HS}$ means the Hilbert-Schmidt norm on the space of endomorphisms of $\g$ and $V$. 
  Equations~\eqref{eq:trace-zero} and
\eqref{eq:negative-trace} imply $A=0$.
\end{proof}

\begin{remark}\label{rem:trace-role}
The ideal assumption is used in~\eqref{eq:error-killed}, not merely the
single identity $A(Q)=0$.  The result, whose proof is a generalization of Kostant's calculation for
$(B_\cJ)|_{\mathfrak{g}} =B$,  is the substitute for the
implication ``zero Jacobi curvature implies zero curvature operator'' at
the stage where only an ideal of infinitesimal symmetries is known.
\end{remark}

\section{A single induction on the curvature-orbit span}\label{sec:induction}

We now prove Theorem~\ref{thm:main} by strong induction on $\kappa(S)$ from
\eqref{eq:index}.  The case $\kappa=0$ has $\Rm=0$.  If $\kappa=1$, the connected group
$G$ acts orthogonally on the one-dimensional space $G(\Rm)$ and therefore
acts trivially.  In both cases the rank conclusion follows from
Proposition~\ref{prop:flat}.

Fix $\kappa\geq2$ and assume Theorem~\ref{thm:main} for every irreducible system
whose curvature-orbit span has dimension less than $\kappa$, in every ambient
dimension.  Let $S=(V,\Rm,G)$ satisfy the hypotheses with $\dim\cM=\kappa$.
Set
\begin{equation}\label{eq:annihilator}
 \n:=\Ann_{\g}(\cM)
 =\{B\in\g:B(Q)=0\text{ for every }Q\in\cM\}.
\end{equation}
This is an ideal and is $\Ad_G$-invariant, since it is the kernel of the
infinitesimal representation on the $G$-module $\cM$.

\subsection{Induction assumption implies  restriction kernels are a fixed tensors}
\begin{lemma}\label{lem:inductive-kernel}
Let $E\subsetneq V$ be geodesic and suppose $r_E\neq0$.  Under the
induction hypothesis,
\begin{equation}\label{eq:kernel-fixed}
 K_E\subset\cM^G:=\{Q\, |\, g(Q)=Q \forall g\in G\},\qquad J_E=\n.
\end{equation}
In particular,
\begin{equation}\label{eq:cross-annihilator}
 \{P_{E,E^\perp}\, |\,\forall P\in\cM\}\subset\n.
\end{equation}
\end{lemma}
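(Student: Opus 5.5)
The plan is to apply the induction hypothesis to the $G$-invariant subspace $K_E$, whose dimension is strictly smaller than $\kappa$. Combined with skew-adjointness of the action on $\cM$, this will show that $J_E$ cannot be larger than $\n$.

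\emph{Step 1: $K_E\subset\cM^G$.} By Theorem~\ref{thm:restriction}, $K_E$ is a $G$-invariant subspace of $\cM$. Since $r_E\neq0$, it is a proper subspace, so $\dim K_E<\kappa$.

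Fix $T\in K_E$. I claim $S_T:=(V,T,G)$ satisfies the hypotheses of Theorem~\ref{thm:main}:
\begin{itemize}
\item $T\in\cM$ gives $T_{x,y}\in\g^\Rm\subset\g$, so $S_T$ is a holonomy system.
\item $G$ is the same group, so $S_T$ is irreducible.
\item By \eqref{eq:submodule-ideal}, $G(T)\subset K_E$, hence $\kappa(S_T)=\dim G(T)<\kappa$.
\item Also by \eqref{eq:submodule-ideal}, $\g^T\subset\g^\Rm$, so $G^T\subset G^\Rm$ (both are connected analytic subgroups). A subgroup of a group that is not transitive on the unit sphere is not transitive either.
\end{itemize}
The induction hypothesis then says $S_T$ is symmetric, i.e.\ $g(T)=T$ for all $g\in G$. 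Hence $K_E\subset\cM^G$. If $T=0$ there is nothing to prove.

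\emph{Step 2: $J_E=\n$.} The inclusion $\n\subset J_E$ is immediate, since $0\in K_E$. Conversely, let $B\in J_E$ and $Q\in\cM$. By Step~1, $B(Q)\in K_E\subset\cM^G$. Differentiating $g(B(Q))=B(Q)$ along $G$ gives $C(B(Q))=0$ for every $C\in\g$; in particular $B^2(Q)=0$. Then \eqref{eq:skew-norm} gives
\[
\norm{B(Q)}^2=-\ip{B^2(Q)}{Q}=0,
\]
so $B(Q)=0$ and $B\in\n$.

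\emph{Step 3: cross curvature.} The inclusion \eqref{eq:cross-annihilator} is \eqref{eq:cross-in-J} combined with $J_E=\n$.

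The only delicate point is Step~1, namely checking that $S_T$ satisfies all the hypotheses of Theorem~\ref{thm:main}. In particular, nontransitivity is inherited only because $\g^T\subset\g^\Rm$. This is why the induction is phrased with $G^\Rm$ rather than $G$: the ambient group $G$ and the space $V$ stay fixed, and only the orbit span decreases.
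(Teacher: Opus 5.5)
Your proposal is correct and follows the paper's proof essentially step for step. You apply the induction hypothesis to $(V,T,G)$ for each $T\in K_E$, using $G(T)\subset K_E\subsetneq\cM$ and $\g^T\subset\g^\Rm$; you then use \eqref{eq:skew-norm} to get $J_E\subset\n$, and conclude via \eqref{eq:cross-in-J}. The paper additionally cites Lemma~\ref{lem:closed} to record that $G^T\subset G^\Rm$ are closed subgroups; you leave this implicit, and it is not needed for the nontransitivity check.
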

\begin{proof}
Theorem~\ref{thm:restriction} makes $K_E$ a $G$-invariant subspace.  Since
$r_E\neq0$, it is proper and $\dim K_E<m$.  For every $T\in K_E$,
\[
 G(T)\subset K_E,\qquad \g^T\subset\g^\Rm,\qquad G^T\subset G^\Rm.
\]
Both $\mathfrak g^T$ and $\mathfrak g^\Rm$ are ideals of the original Lie algebra
 $\mathfrak g$. Since $G$ is connected, compact, and irreducible on $V$, Lemma~\ref{lem:closed} implies 
 that their connected analytic subgroups $G^T$ and $G^\Rm$ are closed in $G$. 
 Moreover, $\mathfrak g^T\subset\mathfrak g^\Rm$ implies $G^T\subset G^\Rm$.
Thus $(V,T,G)$ has
smaller orbit-span dimension, remains irreducible, and has a nontransitive
curvature-generated group.  Induction gives $g(T)=T$ for all $g\in G$.
Therefore $K_E\subset\cM^G$.

If $B\in J_E$ and $T\in\cM$, then $B(T)\in K_E\subset\cM^G$, so
$B^2(T)=0$.  Equation~\eqref{eq:skew-norm} implies $B(T)=0$.  Hence
$J_E\subset\n$, and the reverse inclusion is immediate from the
definition.  Now~\eqref{eq:cross-annihilator} follows from
\eqref{eq:cross-in-J}.
\end{proof}

\subsection{Root centralizers span the ambient space}
Choose $a,b\in V$ with $\Rm(a,b,a,b)\neq0$, using
Lemma~\ref{lem:sectional}.  By Proposition~\ref{prop:flat}, choose a maximal
flat $W$ containing $a$.  Since $T_{\Rm}^{a,a}\neq0$, at least one of the forms
$\Lambda_k^{\Rm}$ on $W$ is nonzero.

Let $\mathscr E$ be the family of all distinct subspaces $E_{k,Q}$ attached
to this one $W$, as $Q$ ranges over $\cM$ and $\Lambda_k^Q\neq0$.
The family is nonempty.  Every $E\in\mathscr E$ is proper and geodesic,
contains $W$, and has $r_E\neq0$, by Proposition~\ref{prop:centralizers}.
Therefore Lemma~\ref{lem:inductive-kernel} gives
\begin{equation}\label{eq:all-root-cross}
 P_{E,E^\perp}\subset\n
 \qquad\forall E\in\mathscr E,\ P\in\cM.
\end{equation}

\begin{proposition}[Spanning by root centralizers]\label{prop:span}
Under the induction hypothesis,
\begin{equation}\label{eq:E-span}
 V=\sum_{E\in\mathscr E}E.
\end{equation}
Consequently at least two members of $\mathscr E$ are distinct, and
\begin{equation}\label{eq:E-intersection}
 \bigcap_{E\in\mathscr E}E=W,\qquad
 W^\perp=\sum_{E\in\mathscr E}E^\perp.
\end{equation}
\end{proposition}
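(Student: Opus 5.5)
The plan is to argue by contradiction, combining \eqref{eq:all-root-cross} with the trace-vanishing Lemma~\ref{lem:trace} applied to the ideal $\n=\Ann_{\g}(\cM)$. Put $F:=\sum_{E\in\mathscr E}E$ and suppose $F\neq V$. Fix $y\in F^\perp$. Since every $E\in\mathscr E$ contains $W$, we have $W\subset F$, so $y\perp W$ and $y\in E^\perp$ for every $E\in\mathscr E$. For any $x\in W\subset E$ and $P\in\cM$, \eqref{eq:all-root-cross} gives
\[
 A:=P_{x,y}\in\n .
\]

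The main step is to verify the remaining hypothesis of Lemma~\ref{lem:trace}, namely $Ax=P_{x,y}x=0$. In the notation of \eqref{eq:jacobi-eigen}, $P_{x,y}x=T_P^{x,x}y$, and $T_P^{x,x}$ is symmetric with common eigenbasis $X_k^P$. Expanding,
\[
 T_P^{x,x}y=\sum_k\Lambda_k^P(x,x)\,\ip{y}{X_k^P}\,X_k^P .
\]
Only indices with $\Lambda_k^P\neq0$ contribute. For each such index, $X_k^P\in E_{k,P}$ by \eqref{eq:root-containments}, and $E_{k,P}\in\mathscr E$, so $\ip{y}{X_k^P}=0$. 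Hence $Ax=0$. The tensor $P\in\cM$ has $P_{u,v}\in\g$, and $\n$ is an ideal that annihilates $P$. Lemma~\ref{lem:trace} therefore yields $P_{x,y}=0$ for all $x\in W$ and all $P\in\cM$. This says $y\in C(W)$, which equals $W$ by \eqref{eq:self-centralizer}. Since $y\perp W$, we get $y=0$, contradicting the assumption; hence \eqref{eq:E-span} holds. I expect this verification that $P_{x,y}x=0$ to be the delicate point: it is exactly where the choice of $\mathscr E$ as the set of \emph{all} root centralizers, over all $Q\in\cM$, is needed.

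For the consequences, note first that each $E\in\mathscr E$ is proper by Proposition~\ref{prop:centralizers}. A single member therefore cannot span $V$, so $\mathscr E$ contains at least two distinct subspaces. By Lemma~\ref{lem:duality}, any two distinct members intersect exactly in $W$. Since every member contains $W$, this gives $\bigcap_{E\in\mathscr E}E=W$. Finally, taking orthogonal complements in $V$, $(\bigcap E)^\perp=\sum E^\perp$, which gives $W^\perp=\sum_{E\in\mathscr E}E^\perp$.
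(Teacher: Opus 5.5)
Your proposal is correct and follows the paper's proof essentially step for step. You use the spectral expansion of $T_P^{x,x}y$ to get $P_{x,y}x=0$, place $P_{x,y}$ in $\n$ via \eqref{eq:all-root-cross}, kill it with Lemma~\ref{lem:trace}, and conclude from $C(W)=W$; the consequences then come from properness and Lemma~\ref{lem:duality}, as in the paper. The differences are cosmetic: you phrase the argument as a contradiction, and you tacitly use that $\mathscr E$ is nonempty, which the paper records just before the proposition.
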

\begin{proof}
Put $F=\sum_{E\in\mathscr E}E$ and take $z\in F^\perp$.
For every fixed $Q\in\cM$, each common eigenvector $X_k^Q$ with
$\Lambda_k^Q\neq0$ lies in its corresponding $E_{k,Q}\subset F$.
Thus $z$ is perpendicular to all such eigenvectors, and the spectral
decomposition gives
\begin{equation}\label{eq:zero-Jacobi}
 Q_{w,z}w=T_Q^{w,w}z=0
 \qquad(w\in W,\ Q\in\cM).
\end{equation}

Fix $w\in W$, $Q\in\cM$, and let $A=Q_{w,z}$.
Choose any $E\in\mathscr E$.  Since $w\in E$ and $z\in E^\perp$,
\eqref{eq:all-root-cross} gives $A\in\n$.  Moreover $\n$ annihilates
$Q$, and $Aw=0$ by~\eqref{eq:zero-Jacobi}.
Lemma~\ref{lem:trace}, applied with $x=w$ and $y=z$, yields $A=0$.
Consequently $z\in C(W)=W$.  Since $W\subset F$ and $z\perp F$, it
follows that $z=0$.  This proves~\eqref{eq:E-span}.

If all members of $\mathscr E$ were the same proper subspace, they could
not span $V$.  There are therefore two distinct members, whose intersection
is $W$ by Lemma~\ref{lem:duality}.  Every member contains $W$, so their
full intersection is $W$.  Taking orthogonal complements gives the second
identity in~\eqref{eq:E-intersection}.  In finite dimension the sums are
algebraic sums; each vector admits a finite decomposition.
\end{proof}

\subsection{Completion of the proof}
Let $P\in\cM$, $w\in W$, and $z\in V$.  By
\eqref{eq:E-intersection} write
\[
 z=z_W+\sum_{\alpha=1}^N z_\alpha,
 \qquad z_W\in W,\quad z_\alpha\in E_\alpha^\perp,
 \quad E_\alpha\in\mathscr E.
\]
Because $W$ is flat and $w\in E_\alpha$,
\[
 P_{w,z}=\sum_{\alpha=1}^N P_{w,z_\alpha}\in\n
\]
by~\eqref{eq:all-root-cross}.  Hence
\begin{equation}\label{eq:W-annihilator}
 P_{W,V}\subset\n\qquad \forall P\in\cM.
\end{equation}
For $g\in G$, equation~\eqref{eq:group-action} gives
\[
 P_{gw,z}=\Ad_g\bigl((g^{-1}(P))_{w,g^{-1}z}\bigr)\in\n,
\]
since $g^{-1}(P)\in\cM$ and $\n$ is $\Ad_G$-invariant.
Irreducibility and $W\neq0$ imply
\[
 \Span\{gW:g\in G\}=V.
\]
By linearity, $P_{x,y}\in\n$ for all $P\in\cM$ and $x,y\in V$.
Therefore
\[
 \g^\Rm\subset\n.
\]
Lemma~\ref{lem:ideal} also gives $\g_\Rm\subset\n$.  Thus
$\n=\g$, so $B(\Rm)=0$ for every $B\in\g$.  Connectedness of $G$
implies $g(\Rm)=\Rm$ for all $g\in G$.
The flat $W$ has dimension at least two, proving the rank assertion and
completing the induction for Theorem~\ref{thm:main}.

\begin{remark}[The one induction via $\dim(G(\Rm))$]\label{rem:single-induction}
Lemma~\ref{lem:inductive-kernel} is the  only place where the induction assumption is used, through
\[
 G(T)\subset K_E\subsetneq G(\Rm).
\]
The trace lemma, the flat lemma, and the construction and geodesy of root
centralizers are independent of the induction. 
\end{remark}

\subsection*{Acknowledgments}
The author thanks N.~Wallach for the analytic argument for the existence of
flats, and H.~Wu, Y.~Niu, and F.~Zheng for their interests, I.~Agricola  for
helpful suggestions.

\end{document}